\newtheorem{theorem}{Theorem}[section]
\newtheorem{Definition}[theorem]{Definition}
\newtheorem{lemma}[theorem]{Lemma}
\newtheorem{proposition}[theorem]{Proposition}
\newtheorem{Example}[theorem]{Example}
\newtheorem{Remark}[theorem]{Remark}
\newenvironment{remark}{\begin{Remark}\begin{em}}{\end{em}\end{Remark}}
\newenvironment{example}{\begin{Example}\begin{em}}{\end{em}\end{Example}}
\newenvironment{definition}{\begin{Definition}\begin{em}}{\end{em}\end{Definition}}
\newcommand{\R}{{\mathbb R}}
\DeclareMathOperator*{\argmin}{\mathrm{arg\, min}}
\DeclareMathOperator{\diam}{\mathrm{diam}}
\DeclareMathOperator{\var}{\mathrm{var}}
\DeclareMathOperator{\supp}{\mathrm{supp}}
\begin{document}

\title[Discrete-time gradient flows in Alexandrov spaces]{Discrete-time gradient flows and law of large numbers in Alexandrov spaces}
\author[Shin-ichi Ohta and Mikl\'os P\'alfia] {Shin-ichi Ohta and Mikl\'os P\'alfia}
\address{Department of Mathematics, Kyoto University, Kyoto 606-8502, Japan}
\email{sohta@math.kyoto-u.ac.jp}
\email{palfia.miklos@aut.bme.hu}

\thanks{\emph{AMS Classifications} (2010): 51K05, 53C20, 58C05, 49L20, 49M37\\
S.O. is supported by the Grant-in-Aid for Young Scientists (B) 23740048;
M.P. is supported by the Research Fellowship of the Canon Foundation.}
 \keywords{Alexandrov space, convex function, discrete time gradient flow, law of large numbers}

\begin{abstract}
We develop the theory of discrete-time gradient flows for convex functions on Alexandrov spaces
with arbitrary upper or lower curvature bounds.
We employ different resolvent maps in the upper and lower curvature bound cases to construct such a flow,
and show its convergence to a minimizer of the potential function.
We also prove a stochastic version, a generalized law of large numbers for convex function valued random variables,
which not only extends Sturm's law of large numbers on nonpositively curved spaces to arbitrary lower or upper 
curvature bounds, but this version seems new even in the Euclidean setting.
These results generalize those in nonpositively curved spaces (partly for squared distance functions)
due to Ba\v{c}\'ak, Jost, Sturm and others,
and the lower curvature bound case seems entirely new.
\end{abstract}

\maketitle

\section{Introduction}

In this paper, we consider discrete-time gradient flows for convex functions on \emph{Alexandrov spaces} $(X,d)$
with arbitrary upper or lower curvature bounds.
An Alexandrov space is a metric space whose sectional curvature is bounded above or below
by some constant in the sense of triangle comparison theorem (see Section~\ref{S:Alex}).
The \emph{discrete-time gradient flow} is introduced with an appropriate notion of \emph{resolvent operator}
$J_{\lambda}^f:X\to X$ defined for a fixed geodesically convex function $f$ and a positive number $\lambda>0$.
The operator $J_{\lambda}^f$ provides a gradient descent step towards the set of minimizers of $f$.
Under upper and lower curvature bounds, we define our $J_{\lambda}^f$ differently.
In the case of upper curvature bounds, we employ the standard \emph{Moreau--Yosida resolvent}:
\begin{equation}\label{eq:J-up}
J_{\lambda}^f(x):=\argmin_{y\in G} \left\{ f(y)+\frac{1}{2\lambda}d(x,y)^2 \right\}
\end{equation}
for a closed geodesically convex set $G$ containing a nonempty sublevel set of $f$.
In the case of lower curvature bounds, we define
\begin{equation}\label{eq:J-low}
J_{\lambda}^f(x):=g\textrm{-\!}\exp(\lambda\nabla(-f)(x)),
\end{equation}
where $g\textrm{-\!}\exp$ is the gradient exponential map and $\nabla(-f)(x)$ denotes the gradient vector of $-f$
(see \cite{PP,Pet07} and Sections~\ref{S:convex}, \ref{S:J} for these notions).
Before discussing the reason why we use these different resolvents, we present our results in this paper.

With these mappings at hand, we define the sequence
\[ x_{k+1}:=J_{\lambda_k}^f(x_k) \]
for $k \ge 0$ with an arbitrary starting point $x_0$ and for an a priori given positive sequence $\lambda_k>0$.
We prove the convergence of $x_k$ to a minimizer of $f$ under various, plausible conditions on $f$ and the sequence $\lambda_k$.
In particular, we generalize the classical results in \cite{Brezis} to arbitrary Alexandrov spaces.
Furthermore, our results generalize the ones recently given in \cite{Ba1} for \emph{NPC spaces}
(Alexandrov spaces with upper curvature bound by $0$) to arbitrary Alexandrov spaces.
In the upper curvature bound case, we allow $X$ to be infinite dimensional,
while in the lower curvature bound case we formulate our results for finite dimensions for technical reasons,
although our techniques would work in infinite dimensions equally well.
The most general known results in the literature, according to our knowledge,
consider NPC spaces and Riemannian manifolds with nonpositive sectional curvature,
see for example \cite{Ben12,Fer02,Li} just to mention a few among the numerous results. Also our results relate to and generalize the ones given in \cite{Jost-Calc,Jost,Jost-NPC,Jost-NLD} for NPC spaces, and harmonic maps with NPC target spaces.

We also consider the case of $f(x)=\sum_{i=1}^nf_i(x)$, where $f_i$ are also geodesically convex functions.
Then, under the assumption of $\sum_{k=0}^\infty\lambda_k^2<+\infty$ and the Lipschitz continuity of $f_i$,
we prove that the sequence generated by
\begin{equation}\label{eq:intr1}
x_{k+1}:=J_{\lambda_k}^{f_1}\circ\cdots\circ J_{\lambda_k}^{f_n}(x_k)
\end{equation}
converges to a minimizer of $f$ in any Alexandrov space.
On the one hand, this result generalizes the ones given for Euclidean spaces in \cite{Bert1,Bert2,Ne1,Ne2} and for NPC spaces in \cite{Ba2}.
On the other hand, this is also a generalization of the ``no dice'' approximation result given in NPC spaces for the \emph{barycenter},
which is the minimizer of $f(x)=\sum_{i=1}^n w_i d(x,a_i)^2$ with fixed points $a_i \in X$, in \cite{LP2,Hol}.
The barycenter (sometimes also called the \emph{Karcher mean} indebted to \cite{Ka77}),
or more generally the \emph{$p$-mean} obtained as the unique minimizer of
$f(x)=\sum_{i=1}^nw_id(x,a_i)^p$ for $p\in [1,+\infty)$, is of great interest, see for example \cite{Arn05,Arn10,Bha,BH,Jost-NPC,LL1,Ken90,Ken91}.
Our general approximation results, motivated by and applied for $p$-means among many others,
carry over to positively curved Riemannian setting, for example, compact Lie groups with bi-invariant Riemannian metrics
\cite{Arn05,Arn10,Moakher,Ken90,Ken91}, see Remarks~\ref{R:2} and \ref{R:3}.

We also prove a stochastic version of the convergence of the discrete-time flow given in \eqref{eq:intr1}.
In this setting, we assume that
\[ f(x):=\int_{F_K(G)}h(x)d\mu(h), \]
where $\mu$ is a probability measure supported over the cone of lower semi-continuous,
$K$-convex functions $F_K(G)$ over $G$, with $K>0$.
Then we prove a \emph{law of large numbers} result for the stochastic sequence
\[ x_{k+1}:=J_{\lambda_k}^{f_k}(x_k), \]
where $f_k$ is a sequence of independent, identically distributed $F_K(G)$-valued random variables with distribution $\mu$.
That is to say, we prove that $x_k\to \mathbb{E}\mu$ almost surely,
where $\mathbb{E}\mu$ is the (unique) minimizer of $f(x)=\int_{F_K(G)}h(x)d\mu(h)$.
This generalizes a result of Sturm \cite{St02b,St03},
which states that $x_k\to \mathbb{E}\mu$ almost surely
when $\mu$ is supported only on squared distance functions $g_a(x):=d(a,x)^2$ on NPC spaces.
This result of Sturm already implies the classical \emph{law of large numbers} on Euclidean spaces,
since on them $\mathbb{E}\mu=\int_{G}ad\tilde{\mu}(a)$,
where $\tilde{\mu}$ is the push-forward measure of $\mu$ under the map $g_a$.
Hence our result extends the law of large numbers to arbitrary Alexandrov spaces and arbitrary convex functions,
see Remarks~\ref{R:2} and \ref{R:3}.
Sturm \cite{St01,St02a,St02b,St05} used his result in his stochastic approach to the theory of harmonic maps between metric spaces,
and also his result became extremely useful for the barycenter in the case of the NPC space of positive definite matrices \cite{LL1,LP2,Hol}.
Therefore we expect wide applicability of our results, for example in the case of positive curvature.

The definitions and properties of $J_{\lambda}^f$ distinguish two different kinds of approaches
in the lower and upper curvature bound cases.
In \cite{ags,Jost-Calc,Jost-NPC,Jost-NLD} among many others,
for setting up the minimizing movements, the original resolvent \eqref{eq:J-up} given in \cite{Brezis}
is being used that we also adopt in the upper curvature bound case.
Besides technical reasons, the usefulness of \eqref{eq:J-up} in discrete-time gradient flows is due to the fact that
Alexandrov spaces with upper curvature bounds are simply connected and have unique minimal geodesics
in balls with designated radii.
We cannot expect these properties in the lower curvature bound case, the injectivity radius can be $0$ even locally.
Then it is difficult to control the behavior of discrete-time flows and there are no investigation in this direction as far as the authors know,
while continuous-time gradient flows are intensively studied in \cite{PP,Pet07,Ly,Oh1,Jost-NPC}.

To overcome this difficulty, we introduce the other (but natural) construction \eqref{eq:J-low}
relying on gradient vectors directly.
This makes an interesting contrast with \eqref{eq:J-up}:
\begin{align}
\log_{x_{k+1}}x_k &=\lambda\nabla f(x_{k+1}), \label{eq:gf-up}\\
\log_{x_{k}}x_{k+1} &=\lambda\nabla(-f)(x_{k}) \label{eq:gf-low},
\end{align}
in the upper and lower curvature bounds, respectively,
where $\log_x y$ is the direction from $x$ to $y$.
In other words, these two flows provided in the opposite curvature bounds are in reverse relation.
In the upper curvature bound case, we take the backward flow for the convex function $f$,
while we take the forward flow for the concave function $-f$ in the lower curvature bound case.
In Euclidean spaces, both methods work equivalently well \cite{Bert1,Bert2,Ne1,Ne2}.
In general, it seems that the curvature bound determines whether a proximal step \eqref{eq:J-up}
or a gradient step \eqref{eq:J-low} is more suitable from the analytic point of view of discrete flows.
For instance, the convexity of squared distance functions, which is the very definition of upper curvature bounds,
can give a contraction estimate of discrete-time gradient flows together with \eqref{eq:gf-up}
(estimate $d(x_{k+1},y_{k+1})$ from above by using $d(x_k,y_k)$
and the convexity of $f$ along a geodesic between $x_{k+1}$ and $y_{k+1}$).
Similarly, the concavity of squared distance functions is useful only with \eqref{eq:gf-low}
(via the convexity of $f$ along a geodesic between $x_k$ and $y_k$).

\section{Alexandrov spaces}\label{S:Alex}

We refer to \cite{bbi} for the basics of metric geometry and Alexandrov spaces.
Let $(X,d)$ be a metric space.
A continuous curve $\gamma:[0,1] \to X$ is called a \emph{minimal geodesic}
if it satisfies $d(\gamma(s),\gamma(t))=|s-t|d(\gamma(0),\gamma(1))$ for all $s,t \in [0,1]$.
We say that $(X,d)$ is \emph{geodesic} if any two points $x,y \in X$ admit a minimal geodesic between them.
Though minimal geodesics are not necessarily unique, we abuse the notation $x\#_ty$, $t \in [0,1]$,
for denoting a minimal geodesic from $x$ to $y$.
A subset $G \subset X$ is said to be \emph{geodesically convex} if, for any $x,y \in G$,
all minimal geodesics $x\#_ty$ between them are contained in $G$.

For $\kappa\in\mathbb{R}$, we denote by $\mathbb{M}^2(\kappa)$ a complete, simply connected,
$2$-dimensional Riemannian manifold of constant sectional curvature $\kappa$.
For three points $x,y,z\in X$ with $d(x,y)+d(y,z)+d(z,x)<2\pi/\sqrt{\kappa}$ if $\kappa>0$,
we can take corresponding points $\tilde{x},\tilde{y},\tilde{z}\in\mathbb{M}^2(\kappa)$
uniquely up to rigid motions such that
\[ d_{\mathbb{M}^2(\kappa)}(\tilde{x},\tilde{y})=d(x,y),\quad
d_{\mathbb{M}^2(\kappa)}(\tilde{y},\tilde{z})=d(y,z),\quad
d_{\mathbb{M}^2(\kappa)}(\tilde{z},\tilde{x})=d(z,x). \]
We call $\triangle\tilde{x}\tilde{y}\tilde{z}$ a \emph{comparison triangle} of $\triangle xyz$ in $\mathbb{M}^2(\kappa)$.

\begin{definition}[Alexandrov spaces]\label{curvaturebounds}
A geodesic metric space $(X,d)$ is called an \emph{Alexandrov space of curvature bounded above by $\kappa$} if,
for any $x,y,z\in X$ with $d(x,y)+d(y,z)+d(z,x)<2\pi/\sqrt{\kappa}$ if $\kappa>0$, we have
\begin{equation}\label{eq:up}
d(y\#_tz,x)\leq d_{\mathbb{M}^2(\kappa)}(\tilde{y}\#_t\tilde{z},\tilde{x})
\end{equation}
for any minimal geodesic $y\#_tz$ joining $y$ and $z$.

Similarly, $(X,d)$ is called an \emph{Alexandrov space of curvature bounded below by $\kappa$} if we have
\begin{equation}\label{eq:low}
d(y\#_tz,x)\geq d_{\mathbb{M}^2(\kappa)}(\tilde{y}\#_t\tilde{z},\tilde{x})
\end{equation}
for any minimal geodesic $y\#_tz$.
\end{definition}

For instance, if $\kappa=0$, then \eqref{eq:up} is calculated as
\[ d(x,y\#_tz)^2\leq (1-t)d(x,y)^2+td(x,z)^2-t(1-t)d(y,z)^2, \]
and \eqref{eq:low} is
\[ d(x,y\#_tz)^2\geq (1-t)d(x,y)^2+td(x,z)^2-t(1-t)d(y,z)^2. \]
By the parallelogram identity, Hilbert spaces have curvature bounded both above and below by $0$.
Here are some further examples.

\begin{example}\label{Ex:Alex}
(1)
A complete, simply connected Riemannian manifold with the Riemannian distance
is an Alexandrov space with curvature bounded above by $\kappa$ if and only if
its sectional curvature is not greater than $\kappa$.
Typical examples of nonpositively curved spaces admitting singularities include
trees, Euclidean buildings and gluing of nonpositively curved spaces.
See \cite[\S 9.1]{bbi} for further examples.

(2)
A complete Riemannian manifold is an Alexandrov space of curvature bounded below by $\kappa$
if and only if its sectional curvature is not less than $\kappa$.
Typical examples of nonnegatively curved spaces admitting singularities include
the boundaries of convex domains in Euclidean spaces,
quotients of nonnegatively curved spaces by isometries (e.g., orbifolds),
and the $L^2$-Wasserstein spaces over nonnegatively curved spaces (see \cite{St061, Oh1}).
We refer to \cite[\S 10.2]{bbi} for further examples.
\end{example}

An important feature of Alexandrov spaces is that angles are well defined between two geodesics
$\gamma$ and $\eta$ emanating from the same point $\gamma(0)=\eta(0)=x$:
\[ \angle_x(\gamma,\eta):=\lim_{s,t\to0+}\angle\widetilde{\gamma(t)}\tilde{x}\widetilde{\eta(s)}, \]
where $\widetilde{\gamma(t)}\tilde{x}\widetilde{\eta(s)}$ is a comparison triangle in $\mathbb{M}^2(\kappa)$.
For fixed $x\in X$, we define $\Sigma_x'X$ as the set of unit speed minimal geodesics $\gamma:[0,\delta]\to X$,
$\delta>0$, emanating from $x$.
The angle $\angle_x(\gamma,\eta)$ defines a pseudo-distance on the set $\Sigma_x'X$.
The completion of $(\Sigma_x'X/\{\angle_x=0\},\angle_x)$ with respect to $\angle_x$
is denoted by $(\Sigma_xX,\angle_x)$, and is called the \emph{space of directions} at $x\in X$.

The \emph{tangent cone} $(C_xX,\sigma_x)$ at $x\in X$ is defined as the Euclidean cone over $(\Sigma_xX,\angle_x)$:
\[ C_xX:=(\Sigma_xX\times[0,\infty))/\sim, \]
where $(\gamma,0)\sim(\eta,0)$ and
\[ \sigma_x((\gamma,s),(\eta,t)):=\sqrt{s^2+t^2-2st\cos\angle_x(\gamma,\eta)} \]
for $(\gamma,s),(\eta,t)\in C_xX$.
We denote by $o_x$ the origin $(*,0)\in C_xX$.
For any $u=(\gamma,s),v=(\eta,t)\in C_xX$, we can define their inner product as
\[ \left\langle u,v\right\rangle:=st\cos\angle_x(\gamma,\eta). \]

If $X$ is complete, finite Hausdorff dimensional and has curvature bounded above in the sense of Definition~\ref{curvaturebounds},
then $(\Sigma_xX,\angle_x)$ is an Alexandrov space of curvature bounded above by $1$
and $(C_xX,\sigma_x)$ is an Alexandrov space of curvature bounded above by $0$.
In the case of curvature bounded below, we have the same curvature bounds
for $(\Sigma_xX,\angle_x)$ and $(C_xX,\sigma_x)$, but from below.
In the infinite dimensional case, however, this is not the case in general.

By the definition of the angle, we readily have the following (see \cite[Corollary~4.5.7]{bbi}).

\begin{theorem}[First variation formula]\label{T:1vf}
Let $\gamma:[0,\delta] \to X$ be a geodesic in an Alexandrov space $(X,d)$ with curvature bounded above or below by $\kappa$,
and assume that $X$ is locally compact in the lower curvature bound case.
Put $x:=\gamma(0)$ and take $y \in X$ with $d(x,y)<\pi/\sqrt{\kappa}$ if $\kappa>0$.
Then the function $d(t):=d(\gamma(t),y)$ satisfies
\begin{equation}\label{eq:1vf}
\lim_{\epsilon\to 0+}\frac{d(\epsilon)-d(0)}{\epsilon}
 =-\frac{1}{d(x,y)} \max_{\eta} \left\langle \gamma'(0),\eta'(0) \right\rangle,
\end{equation}
where $\eta:[0,1] \to X$ runs over all minimal geodesics from $x$ to $y$.
\end{theorem}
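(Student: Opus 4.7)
The plan is to reduce the statement to computing the limit $\lim_{\epsilon\to 0+}\overline{\theta}_\epsilon$, where $\overline{\theta}_\epsilon$ is the angle at $\tilde{x}$ in the comparison triangle $\triangle\tilde{x}\widetilde{\gamma(\epsilon)}\tilde{y}\subset\mathbb{M}^2(\kappa)$ of $\triangle x\gamma(\epsilon)y$. Since the comparison triangle has the same side lengths as the original by construction, the law of cosines in $\mathbb{M}^2(\kappa)$ gives an exact expression for $d(\gamma(\epsilon),y)$ in terms of $\epsilon$, $d(x,y)$, and $\overline{\theta}_\epsilon$; a direct Taylor expansion yields
\[
\frac{d(\gamma(\epsilon),y)-d(x,y)}{\epsilon}=-\cos\overline{\theta}_\epsilon+O(\epsilon),
\]
uniformly in the bounded range of $\overline{\theta}_\epsilon$. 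Hence the theorem reduces to showing $\lim_{\epsilon\to 0+}\overline{\theta}_\epsilon=\theta^{*}$, where $\theta^{*}:=\min_\eta\angle_x(\gamma,\eta)$ ranges over minimal geodesics $\eta:[0,1]\to X$ from $x$ to $y$. Indeed, with $\eta^{*}$ attaining the minimum, the inner product on $C_xX$ yields $-\cos\theta^{*}=-d(x,y)^{-1}\langle\gamma'(0),(\eta^{*})'(0)\rangle=-d(x,y)^{-1}\max_\eta\langle\gamma'(0),\eta'(0)\rangle$, matching the asserted formula.

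For the upper curvature bound case, the hypothesis $d(x,y)<\pi/\sqrt{\kappa}$ (vacuously satisfied when $\kappa\leq 0$) forces the minimal geodesic $\eta$ from $x$ to $y$ to be unique, so $\theta^{*}=\angle_x(\gamma,\eta)$ is a single number. The CAT($\kappa$) comparison inequality gives $\overline{\theta}_\epsilon\geq\angle_x(\gamma,\eta)$ for each small $\epsilon$, while the standard Alexandrov angle-monotonicity lemma for CAT spaces shows that $\overline{\theta}_\epsilon=\widetilde{\angle}_\kappa(\gamma(\epsilon),x,\eta(1))$ is non-increasing as $\epsilon\to 0+$ with limit equal to the intrinsic angle $\angle_x(\gamma,\eta)$. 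Combining these gives $\overline{\theta}_\epsilon\downarrow\theta^{*}$ and closes the upper case.

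For the lower curvature bound case, minimal geodesics from $x$ to $y$ need not be unique, and I would invoke the local compactness hypothesis. For every minimal geodesic $\eta$ from $x$ to $y$, applying the CBB($\kappa$) inequality to $\triangle x\gamma(\epsilon)y$ with side $\eta$ gives $\overline{\theta}_\epsilon\leq\angle_x(\gamma,\eta)$, so $\limsup_{\epsilon\to 0+}\overline{\theta}_\epsilon\leq\theta^{*}$. For the matching $\liminf$, I would use the interior-point form of \eqref{eq:low}: for a fixed small $s>0$ and $\epsilon<s$, the point on the side $\tilde{x}\widetilde{\gamma(s)}$ at distance $\epsilon$ from $\tilde{x}$ in the comparison triangle of $\triangle x\gamma(s)y$ satisfies $d(\gamma(\epsilon),y)\geq d_{\mathbb{M}^2(\kappa)}(\widetilde{\gamma(\epsilon)},\tilde{y})$, and the law of cosines then yields $\liminf_{\epsilon\to 0+}\overline{\theta}_\epsilon\geq\widetilde{\angle}_\kappa(\gamma(s),x,y)$; letting $s\to 0+$ and using local compactness to extract a subsequential limit $\eta^{*}$ of minimal geodesics from $x$ to $y$ that realises $\theta^{*}$ closes the argument.

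The main obstacle is the compactness step in the lower curvature case: one must carefully extract subsequential limits of the minimal geodesics $\eta_\epsilon$ used to close the triangles $\triangle x\gamma(\epsilon)y$ and verify that the limiting $\eta^{*}$ genuinely achieves $\min_\eta\angle_x(\gamma,\eta)$. Local compactness of $X$, combined with upper semi-continuity of the angle functional on $\Sigma_xX$---both standard in Alexandrov geometry with lower curvature bounds---underwrite this step. A minor technical nuisance is controlling the uniformity of the law-of-cosines Taylor expansion as $\epsilon\to 0+$; this is immediate for $\kappa=0$ and requires only an elementary bound on higher-order terms for $\kappa\neq 0$.
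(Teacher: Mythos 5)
You should first be aware that the paper does not prove this statement at all: Theorem~\ref{T:1vf} is quoted from \cite[Corollary~4.5.7]{bbi} with the comment that it follows ``by the definition of the angle'', so your attempt is to be measured against the standard textbook argument --- and your overall shape (reduce, via the model law of cosines, to showing $\lim_{\epsilon\to 0+}\overline{\theta}_\epsilon=\theta^{*}$, then use comparison and angle monotonicity) is exactly that argument. The upper curvature bound half is essentially right, with one caveat: monotonicity only gives that $\overline{\theta}_\epsilon$ decreases to a limit $\theta_0\geq\angle_x(\gamma,\eta)$. The matching inequality $\theta_0\leq\angle_x(\gamma,\eta)$ is \emph{not} part of the monotonicity lemma; it is the separate estimate \cite[Proposition~4.5.2]{bbi} (triangle inequality through a point $\eta(s)$ with $s$ comparable to $\epsilon$, plus the fact that small comparison angle forces $d(\gamma(\epsilon),\eta(s))$ to be small), valid without curvature or compactness assumptions --- indeed the paper remarks that this ``$\le$'' half is all that is used except in Lemma~\ref{L:1}(I). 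You should cite or reproduce that estimate rather than fold it into the monotonicity lemma.

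In the lower curvature bound case there is a genuine gap: your $\liminf$ chain is circular. The interior-point form of \eqref{eq:low} applied to $\triangle x\gamma(s)y$ does give $\liminf_{\epsilon\to 0+}\overline{\theta}_\epsilon\geq\widetilde{\angle}_\kappa(\gamma(s),x,y)$, but $\widetilde{\angle}_\kappa(\gamma(s),x,y)$ \emph{is} $\overline{\theta}_s$, so this merely re-proves monotonicity of $\epsilon\mapsto\overline{\theta}_\epsilon$, and letting $s\to 0+$ returns $\liminf\geq\lim$ --- no information about $\theta^{*}$ has entered. Nor can it by this route: $\overline{\theta}_\epsilon$ depends only on the three side lengths, so no choice of minimal geodesic from $x$ to $y$ occurs in your quantities, and ``extracting a subsequential limit of minimal geodesics from $x$ to $y$'' acts on nothing. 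What local compactness is actually for is the following mechanism, which you gesture at in your last paragraph but never supply: choose minimal geodesics $\sigma_\epsilon$ from $\gamma(\epsilon)$ to $y$; the comparison at the moving vertex gives $\angle_{\gamma(\epsilon)}(\sigma_\epsilon,\gamma^-)\geq\widetilde{\angle}_\kappa(x,\gamma(\epsilon),y)$, and since adjacent angles along the geodesic $\gamma$ sum to exactly $\pi$ in spaces with curvature bounded below, $\angle_{\gamma(\epsilon)}(\sigma_\epsilon,\gamma^+)\leq\pi-\widetilde{\angle}_\kappa(x,\gamma(\epsilon),y)\to\theta_0$, because the degenerating comparison triangle has angle at $\widetilde{\gamma(\epsilon)}$ tending to $\pi-\theta_0$. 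By Arzel\`a--Ascoli (this is where local compactness enters) a subsequence $\sigma_{\epsilon_i}$ converges to a minimal geodesic $\eta^{*}$ from $x$ to $y$, and semicontinuity of angles under convergence of geodesics \emph{with converging basepoints} --- itself a comparison argument, not mere continuity of the metric on $\Sigma_xX$ as your phrase ``upper semi-continuity of the angle functional on $\Sigma_xX$'' suggests --- yields $\angle_x(\gamma,\eta^{*})\leq\theta_0$, hence $\theta^{*}\leq\theta_0$, closing the case. Without this step (geodesics from $\gamma(\epsilon)$, the adjacent-angle lemma, and the basepoint-moving semicontinuity) the lower-bound half of your proof does not go through.
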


We remark that $\eta$ is unique under the upper curvature bound.
The inequality `$\le$' holds in \eqref{eq:1vf} for any $\eta$ in a more general situation
without any compactness assumption (\cite[Proposition~4.5.2]{bbi}),
and such an inequality is enough in our discussion in the lower curvature bound case.
Equality in \eqref{eq:1vf} is necessary only in the proof of Lemma~\ref{L:1}(I).

\section{Convex functions on Alexandrov spaces}\label{S:convex}

Let $(X,d)$ be an Alexandrov space with curvature bounded above or below by $\kappa\in\mathbb{R}$.
We say that a function $f:X\to (-\infty,\infty]$ is \emph{$K$-$($geodesically$)$ convex} for $K \in \mathbb{R}$ if
\begin{equation}\label{k-convexity}
f(x\#_ty)\leq (1-t)f(x)+tf(y)-\frac{K}{2}t(1-t)d(x,y)^2
\end{equation}
holds for any $x,y\in X$, $t\in[0,1]$ and any minimal geodesic $x\#_t y$.
The $0$-convexity will be simply called the convexity.

\begin{definition}[Absolute gradients]
Let $f:X\to(-\infty,\infty]$ be lower semi-continuous and $K$-convex.
Then the (descending) \emph{absolute gradient} of $f$ at $x \in X$ with $f(x) \neq \infty$ is defined by
\[ |\nabla\!_- f|(x):=\max\left\{0,\limsup_{y\to x}\frac{f(x)-f(y)}{d(x,y)}\right\}. \]
Note that $|\nabla\!_- f|(x)\in[0,\infty]$ and also $|\nabla\!_- f|(x)=0$ if $f(x)=\inf_{y\in X}f(y)$.
\end{definition}

\begin{definition}[Directional derivatives]
For $f:X\to(-\infty,\infty]$, the \emph{directional derivative} of $f$ at $x$ with $f(x) \neq \infty$
in the direction $v \in C_xX$ is defined as
\[ D_xf(v):=\liminf_{(\gamma,s) \to v} \left\{ \lim_{t\to 0+}\frac{f(\gamma(st))-f(x)}{t} \right\}, \]
where $(\gamma,s)\in \Sigma'_xX \times [0,\infty) \subset C_xX$.
\end{definition}

The above limit along $\gamma$ indeed exists for lower semi-continuous, $K$-convex functions.
Note that $D_xf(v) \ge -|\nabla\!_- f|(x) \cdot |v|$ clearly holds.
Typical examples of $K$-convex (or $K$-concave) functions are squared distance functions.
We set $d_a(x):=d(a,x)$ for $a,x \in X$, and denote closed metric balls by
\[ \bar{B}_a(r):=\{x\in X:d(a,x)\le r\},\quad a \in X,\ r>0. \]

\begin{proposition}[Proposition~3.1 in \cite{Oh2}]\label{P:Convex}
Let $(X,d)$ be an Alexandrov space with curvature bounded above by $\kappa>0$.
Then, for any $a \in X$, the function $d_a^2$ is $K$-convex on the $($geodesically convex$)$ metric ball
$\bar{B}_a(r)$ with $2r=(\pi/2-\epsilon)/\sqrt{\kappa}$ and $K=(\pi-2\epsilon)\tan\epsilon$
for arbitrary $\epsilon\in(0,\pi/2)$.
\end{proposition}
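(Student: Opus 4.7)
The plan is to reduce the statement to the model space $\mathbb{M}^2(\kappa)$ via the CAT($\kappa$) comparison, and then carry out a direct Hessian computation on the squared distance function. For any $y,z\in\bar{B}_a(r)$ and any minimal geodesic $\gamma=y\#z$ in $X$, the perimeter $d(a,y)+d(y,z)+d(z,a)\le 4r=(\pi-2\epsilon)/\sqrt{\kappa}<2\pi/\sqrt{\kappa}$ permits the construction of a comparison triangle $\triangle\tilde{a}\tilde{y}\tilde{z}$ in $\mathbb{M}^2(\kappa)$. On the model side, the triangle inequality applied from both endpoints yields $d(\tilde{a},\tilde{\gamma}(t))\le r+L/2\le 2r<\pi/(2\sqrt{\kappa})$, where $L:=d(\tilde{y},\tilde{z})$ and $\tilde{\gamma}=\tilde{y}\#\tilde{z}$. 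Since $d_{\tilde{a}}^2$ is strictly convex on $B_{\tilde{a}}(\pi/(2\sqrt{\kappa}))$ (as will be seen from the Hessian below), this a priori bound upgrades to $d(\tilde{a},\tilde{\gamma}(t))\le r$ along $\tilde{\gamma}$, and the upper curvature inequality \eqref{eq:up} then gives $d(a,\gamma(t))\le d_{\mathbb{M}^2(\kappa)}(\tilde{a},\tilde{\gamma}(t))\le r$, establishing the geodesic convexity of $\bar{B}_a(r)$.

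With the three side lengths $d(a,y),d(a,z),d(y,z)$ preserved under the comparison while $d(a,\gamma(t))\le d_{\mathbb{M}^2(\kappa)}(\tilde{a},\tilde{\gamma}(t))$, the $K$-convexity inequality in $X$ reduces to its analogue on $\mathbb{M}^2(\kappa)$. To prove the latter, I would work in normal polar coordinates $(\rho,\phi)$ centered at $\tilde{a}$, in which the metric reads $d\rho^2+\kappa^{-1}\sin^2(\sqrt{\kappa}\rho)\,d\phi^2$. A routine Christoffel computation produces the two eigenvalues of $g^{-1}\mathrm{Hess}(d_{\tilde{a}}^2)$, namely $2$ in the radial direction and $2\sqrt{\kappa}\rho\cot(\sqrt{\kappa}\rho)$ in the tangential one; for $\sqrt{\kappa}\rho<\pi/2$ the tangential eigenvalue is the smaller of the two and positive, so $\mathrm{Hess}(d_{\tilde{a}}^2)\ge 2\sqrt{\kappa}\rho\cot(\sqrt{\kappa}\rho)\cdot g$ as bilinear forms.

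Along the geodesic $\tilde{\gamma}$, the bound $\rho(\tilde{\gamma}(t))\le 2r$ from the first paragraph combined with the monotone decrease of $x\cot x$ on $(0,\pi/2)$ gives
\[
\frac{d^2}{dt^2}d_{\tilde{a}}^2(\tilde{\gamma}(t))=\mathrm{Hess}(d_{\tilde{a}}^2)(\tilde{\gamma}',\tilde{\gamma}')\ge 2\sqrt{\kappa}(2r)\cot\bigl(\sqrt{\kappa}\cdot 2r\bigr)L^2=2\Bigl(\frac{\pi}{2}-\epsilon\Bigr)\tan\epsilon\cdot L^2=KL^2,
\]
where I have used $\sqrt{\kappa}(2r)=\pi/2-\epsilon$ so that $\cot(\sqrt{\kappa}\cdot 2r)=\tan\epsilon$. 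Integrating twice in $t$ yields the required $K$-convexity inequality with $K=(\pi-2\epsilon)\tan\epsilon$, and transferring back via the comparison triangle completes the proof. The main technical obstacle is the Hessian computation on the spherical model for general $\kappa$, which requires careful bookkeeping of the $\sqrt{\kappa}$-scaling; the specific form of the constant then pops out of the bootstrap estimate $\rho\le 2r$ together with the identity $\cot(\pi/2-\epsilon)=\tan\epsilon$.
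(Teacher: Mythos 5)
Your proof is correct, and since the paper itself imports this statement from \cite{Oh2} without reproving it, the relevant comparison is with that source: your route --- reducing to $\mathbb{M}^2(\kappa)$ via a comparison triangle (with the perimeter bound $4r<2\pi/\sqrt{\kappa}$), bounding $d(\tilde{a},\tilde{\gamma}(t))\le r+L/2\le 2r$ a priori, and then estimating the second derivative of $d_{\tilde{a}}^2$ from below by $2x\cot x$ at $x=\sqrt{\kappa}\cdot 2r=\pi/2-\epsilon$ using the monotone decrease of $x\cot x$ --- is essentially the same spherical-model argument as the cited proof, where the identity $\cot(\pi/2-\epsilon)=\tan\epsilon$ produces the constant $K=(\pi-2\epsilon)\tan\epsilon$ in exactly the way you describe. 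All the steps check out, including the convexity bootstrap establishing geodesic convexity of $\bar{B}_a(r)$, so nothing further is needed.
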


\begin{proposition}[Lemma 3.3 in \cite{Oh1}]\label{P:Convex2}
Let $(X,d)$ be an Alexandrov space with curvature bounded below by $\kappa<0$.
Then, for any $a \in X$, the function $-d_a^2$ is $K$-convex on the metric ball
$\bar{B}_a(r)$ with $K=-2(1-\kappa(2r)^2)$ for all $r>0$.
\end{proposition}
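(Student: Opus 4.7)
The plan is to reduce the assertion to the model hyperbolic plane $\mathbb{M}^2(\kappa)$ via the defining lower-curvature comparison \eqref{eq:low}, and then carry out a Hessian estimate in that model. Concretely, fix $a \in X$, take $y, z \in \bar{B}_a(r)$, fix a minimal geodesic $\gamma(t) = y\#_t z$ in $X$, and choose a comparison triangle $\tilde{a}\tilde{y}\tilde{z}$ in $\mathbb{M}^2(\kappa)$. The lower curvature bound gives $d(a,\gamma(t)) \ge d_{\mathbb{M}^2(\kappa)}(\tilde{a},\tilde{y}\#_t\tilde{z})$, hence $-d(a,\gamma(t))^2 \le -d_{\mathbb{M}^2(\kappa)}(\tilde{a},\tilde{y}\#_t\tilde{z})^2$. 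Since comparison triangles preserve side lengths, this reduces the problem to proving the $K$-convexity inequality \eqref{k-convexity} for $-d_{\tilde{a}}^2$ along the unique minimal geodesic $\tilde{y}\#_t\tilde{z}$ in $\mathbb{M}^2(\kappa)$ with the same constant $K=-2(1-\kappa(2r)^2)$.

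Inside the model, because $\mathbb{M}^2(\kappa)$ is CAT($0$), the function $\rho := d_{\tilde{a}}$ is convex along every geodesic, so $\tilde{y}\#_t\tilde{z}$ stays inside $\bar{B}_{\tilde{a}}(r)$. A standard warped-product/Jacobi-field computation in geodesic polar coordinates about $\tilde{a}$ yields that $\mathrm{Hess}(\rho^2)$ has eigenvalue $2$ in the radial direction and eigenvalue $2\rho\sqrt{-\kappa}\coth(\sqrt{-\kappa}\,\rho)$ in the tangential direction, both monotone increasing in $\rho$. Therefore along the geodesic the largest eigenvalue of $\mathrm{Hess}(\rho^2)$ is at most $2r\sqrt{-\kappa}\coth(\sqrt{-\kappa}\,r)$, and equivalently the second derivative of $-\rho^2 \circ (\tilde{y}\#_t\tilde{z})$ is bounded below by $-2r\sqrt{-\kappa}\coth(\sqrt{-\kappa}\,r)\cdot d(\tilde{y},\tilde{z})^2$.

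To convert this bound into the stated $K$, I would verify the elementary inequality $s\coth s \le 1 + 4s^2$ for all $s \ge 0$, which is immediate from the expansion $s\coth s = 1 + s^2/3 + O(s^4)$ near the origin together with the linear growth $s\coth s \sim s$ at infinity. Setting $s = \sqrt{-\kappa}\,r$ gives $-2r\sqrt{-\kappa}\coth(\sqrt{-\kappa}\,r) \ge -2(1-\kappa(2r)^2) = K$, so the second derivative of $-\rho^2$ along the model geodesic is at least $K\,d(\tilde{y},\tilde{z})^2$. Integrating this through the standard one-variable comparison recovers \eqref{k-convexity} inside $\mathbb{M}^2(\kappa)$, and pulling the inequality back to $X$ via the reduction of the first step finishes the proof.

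The only genuine technical content lies in the Hessian formula for $\rho^2$ in constant curvature $\kappa<0$, where the polar metric is $d\rho^2 + \sinh^2(\sqrt{-\kappa}\,\rho)/(-\kappa)\,d\theta^2$; the tangential eigenvalue is essentially the logarithmic derivative of the warping function, and the calculation is classical but must be executed carefully to land on the correct constants. Everything else — the reduction to the model via \eqref{eq:low}, the scalar inequality, and the one-variable integration — is routine.
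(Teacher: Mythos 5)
Your argument is correct. Note first that the paper itself contains no proof of Proposition~\ref{P:Convex2}: it is quoted from Lemma~3.3 of \cite{Oh1}, so there is no internal proof to compare against, but your derivation is the standard model-space argument and is sound. The reduction step works exactly as you say: the lower bound \eqref{eq:low} points the right way precisely because you negate the squared distance, and comparison triangles preserve the three side lengths appearing in \eqref{k-convexity}. You also handle correctly the one genuinely subtle point: under a lower curvature bound the geodesic $y\#_tz$ in $X$ may well leave $\bar{B}_a(r)$, but your proof never needs it to stay inside --- only the \emph{model} geodesic must remain in $\bar{B}_{\tilde{a}}(r)$, which your $\mathrm{CAT}(0)$ convexity remark guarantees. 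The Hessian data are right: eigenvalues $2$ (radial) and $2\sqrt{-\kappa}\,\rho\coth(\sqrt{-\kappa}\,\rho)$ (tangential), the latter always at least $2$ since $s\coth s\ge 1$ and increasing in $\rho$ since $\sinh 2s\ge 2s$, so the largest eigenvalue along the geodesic is indeed $2\sqrt{-\kappa}\,r\coth(\sqrt{-\kappa}\,r)$, and the one-variable integration from the second-derivative bound to \eqref{k-convexity} is routine.

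The only blemish is your justification of $s\coth s\le 1+4s^2$: checking the expansion at $0$ and the growth at infinity does not prove an inequality on all of $[0,\infty)$, and since this is exactly the step producing the stated constant $K$, it deserves an actual proof. Fortunately it has a one-line one: $s\cosh s-\sinh s=\int_0^s t\sinh t\,dt\le s(\cosh s-1)$, hence $s\coth s\le 1+s\tanh(s/2)\le 1+\frac{s^2}{2}\le 1+4s^2$. With this sharper bound your method in fact yields the better constant $-2\sqrt{-\kappa}\,r\coth(\sqrt{-\kappa}\,r)$-convexity on $\bar{B}_a(r)$; the slack built into $(2r)^2$ is what accommodates the generalization stated right after the proposition, to geodesically convex sets $G$ with $\diam G\le 2r$, where $d(a,\cdot)$ can reach $2r$ and the same estimate at $s=\sqrt{-\kappa}\,(2r)$ gives exactly $1-\kappa(2r)^2$.
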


Slightly more generally, we can take a geodesically convex set $G \subset X$
with $\diam G \le 2r$ in Propositions~\ref{P:Convex}, \ref{P:Convex2}.

In the lower curvature bound case, by comparing the convexity of $f$ and the concavity of the squared distance function,
one can find the useful notion of gradient vectors as follows (see \cite{PP, Pet07, Ly, Oh1} for details).
Let $(X,d)$ be an Alexandrov space of curvature bounded below
and $f:X \to (-\infty,\infty]$ be a lower semi-continuous, $K$-convex function.
Then, at every $x \in X$ with $0<|\nabla\!_- f|(x)<\infty$,
we can find the unique direction $\gamma \in \Sigma_x X$ such that $D_x f(\gamma)=-|\nabla\!_- f|(x)$ and
\begin{equation}\label{eq:grad}
D_{x}f(\eta) \geq -|\nabla\!_- f|(x) \langle \gamma,\eta \rangle
\end{equation}
for all $\eta \in \Sigma_x X$.
Thus $\nabla(-f)(x):=(\gamma,|\nabla\!_- f|(x))\in C_xX$ can be regarded as the \emph{gradient vector} of $-f$ at $x$.
Set also $\nabla(-f)(x):=o_x$ if $|\nabla\!_- f|(x)=0$.
The gradient vector will be used to define an appropriate resolvent map for $f$.

\section{Resolvent maps}\label{S:J}

In this section, we introduce our key tool also appeared in \cite{Jost-Calc,Jost,Jost-NPC,Jost-NLD}, the \emph{resolvent map} $J^f_{\lambda}$,
to construct discrete-time gradient flows for convex functions.
We will adopt different definitions of $J^f_{\lambda}$ in the upper and lower curvature bound cases.
Throughout the section, let $f:X \to (-\infty,\infty]$ be a convex, lower semi-continuous function
not identically $+\infty$, and fix the size $\lambda >0$ of the discrete-time step
and a closed, geodesically convex set $G \subset X$ containing a nonempty sublevel set of $f$.

First, let $(X,d)$ be a complete Alexandrov space with curvature bounded above by $\kappa$.
If $\kappa>0$, then we assume $\diam G<\pi/(2\sqrt{\kappa})$.
In this case, we employ the standard resolvent map used in, e.g., \cite{Jost, Mayer, ags}.

\begin{definition}[Resolvent map, upper curvature bound case]
For each $x \in X$, we define
\begin{equation}\label{eq:resolventabove}
J_{\lambda}^f(x):=\argmin_{y\in G} \left\{ f(y)+\frac{1}{2\lambda}d(x,y)^2 \right\}.
\end{equation}
\end{definition}

\begin{lemma}
Let $(X,d)$, $G$ and $f$ be as above.
Then there exists a unique point $y \in G$ attaining the minimum \eqref{eq:resolventabove}.
\end{lemma}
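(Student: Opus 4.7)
The plan is to exploit the $K'$-strong convexity of the functional $h(y) := f(y) + \frac{1}{2\lambda} d(x,y)^2$ on $G$, from which both uniqueness and existence follow in standard fashion.

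First I would establish that $h$ is $K'$-convex on $G$ for some $K' > 0$. When $\kappa \le 0$, the space is CAT$(0)$ and the triangle comparison gives that $d(x,\cdot)^2$ is $2$-convex on all of $X$, so $h$ is $(1/\lambda)$-convex. When $\kappa > 0$, the hypothesis $\diam G < \pi/(2\sqrt{\kappa})$ lets me pick $\epsilon \in (0,\pi/2)$ with $\diam G \le (\pi/2 - \epsilon)/\sqrt{\kappa}$, and Proposition~\ref{P:Convex} (in the ``slightly more general'' form noted immediately after it) yields $K_0$-convexity of $d(x,\cdot)^2$ on $G$ with $K_0 = (\pi - 2\epsilon)\tan\epsilon > 0$. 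Adding the $0$-convex function $f$ preserves $K'$-convexity of $h$ with $K' := K_0/\lambda > 0$.

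Uniqueness is then immediate: if $y_1, y_2 \in G$ both attain $I := \inf_G h$, the midpoint $m := y_1 \#_{1/2} y_2 \in G$ (by geodesic convexity of $G$) satisfies
$$I \le h(m) \le \frac{h(y_1) + h(y_2)}{2} - \frac{K'}{8} d(y_1,y_2)^2 = I - \frac{K'}{8} d(y_1,y_2)^2,$$
forcing $y_1 = y_2$. For existence, I would first note $I < \infty$: the hypothesis that $G$ contains a nonempty sublevel set of $f$ produces some $y_0 \in G$ with $h(y_0) < \infty$. Granting $I > -\infty$, any minimizing sequence $\{y_n\} \subset G$ satisfies, via the midpoint form of $K'$-convexity applied at $m_{nm} := y_n \#_{1/2} y_m \in G$,
$$\frac{K'}{8} d(y_n,y_m)^2 \le \frac{h(y_n) + h(y_m)}{2} - h(m_{nm}) \le \frac{h(y_n) + h(y_m)}{2} - I \longrightarrow 0,$$
so $\{y_n\}$ is Cauchy. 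Completeness of $X$ furnishes a limit $y^\ast$, closedness of $G$ places it in $G$, and lower semi-continuity of $h$ gives $h(y^\ast) \le \liminf_n h(y_n) = I$.

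The main obstacle will be ruling out $I = -\infty$. My plan there is a coercivity argument driven again by $K'$-convexity: if $I = -\infty$, choose $y_n \in G$ with $h(y_n) \to -\infty$ and interpolate $z_n := y_0 \#_{t_n} y_n$ with parameters $t_n \in (0,1]$ tuned so that $\{z_n\}$ clusters near $y_0$ while the $K'$-convex upper bound still forces $h(z_n) \to -\infty$, contradicting the lower semi-continuity of $h$ at $y_0$. Making this choice robust without appealing to local compactness, as required in the infinite-dimensional upper curvature setting highlighted in the introduction, is the delicate technical point.
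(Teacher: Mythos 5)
Your argument is essentially the paper's: the authors likewise deduce $K'$-convexity of $h(y)=f(y)+\frac{1}{2\lambda}d(x,y)^2$ with $K'>0$ from Proposition~\ref{P:Convex} and the bound $\diam G<\pi/(2\sqrt{\kappa})$, and then delegate to \cite[Lemma~2.4.8]{ags} exactly the optimization argument you write out: the midpoint form of strong convexity makes minimizing sequences Cauchy, completeness of $X$, closedness of $G$ and lower semi-continuity of $h$ produce the minimizer, and the same midpoint estimate forces uniqueness. (One shared implicit point: when $\kappa>0$, Proposition~\ref{P:Convex} and its ``slightly more general'' form control $d_x^2$ on $G$ when $x$ is compatible with the diameter bound, e.g.\ $x\in G$, which is the case in every application in the paper; your write-up inherits this from the paper's own proof, so it is not a defect of your argument relative to theirs.)

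The one step you flag as delicate, ruling out $I=-\infty$, is in fact routine and needs neither local compactness nor any tuning of parameters; your interpolation plan closes it in three lines. Since $h$ is lower semi-continuous and $h(y_0)<\infty$, there is $r>0$ such that $h\ge h(y_0)-1$ on $\bar{B}_{y_0}(r)\cap G$. For $y\in G$ with $D:=d(y_0,y)>r$, set $t:=r/D$ and $z:=y_0\#_t y\in G$, so $d(y_0,z)=r$; the $K'$-convexity of $h$ along this geodesic gives
\[ h(y_0)-1\le h(z)\le (1-t)h(y_0)+t\,h(y)-\frac{K'}{2}t(1-t)D^2, \]
whence $h(y)\ge h(y_0)-\frac{D}{r}+\frac{K'}{2}D(D-r)$, and the right-hand side is bounded below (indeed tends to $+\infty$) as a function of $D>r$; for $D\le r$ the local bound applies directly. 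When $\kappa>0$ the set $G$ is bounded by hypothesis, so $D\le\diam G$ and even plain convexity ($K'=0$ in the display) already yields the lower bound there. With this inserted, your proof is complete and coincides in substance with the paper's route, merely making explicit the ``straightforward optimization argument'' that the paper outsources to \cite{ags}.
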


\begin{proof}
By Proposition~\ref{P:Convex} and $\diam G<\pi/(2\sqrt{\kappa})$,
the function $y \mapsto f(y)+\frac{1}{2\lambda}d(x,y)^2$ is $K$-convex on $G$ for some $K>0$.
The rest of the argument can be obtained by a straightforward optimization argument,
see for example \cite[Lemma~2.4.8]{ags}.
\end{proof}

For any minimal geodesic $\gamma:[0,\delta] \to G$ with $\gamma(0)=J_{\lambda}^f(x)$,
we deduce from the first variation formula (Theorem~\ref{T:1vf}) that
\begin{equation}\label{eq:optimum}
D_{J_{\lambda}^f(x)}f(\dot{\gamma}(0))
 -\frac{1}{\lambda}\left\langle \dot{\eta}(0),\dot{\gamma}(0) \right\rangle \ge 0,
\end{equation}
where $\eta:[0,1] \to X$ is any minimal geodesic from $J_{\lambda}^f(x)$ to $x$.

Next we consider the lower curvature bound case.
In this setting, the definition provided by \eqref{eq:resolventabove} for $J_{\lambda}^f(x)$ is not convenient,
because the squared distance function is no longer convex, but is concave instead.
This concavity leads to, however, the advantage of well defined gradient vectors of $-f$.
Then we shall define the resolvent map by using an ``exponential map'' from $C_x X$ to $X$.
Although we can not simply use geodesics since there may be no geodesic with a given initial direction,
the gradient curves of the convex function $-d_x^2$ will do the job.

Let $(X,d)$ be a complete, finite dimensional Alexandrov space of curvature bounded below by $\kappa$
with $\partial X=\emptyset$.
Note that, even for $\kappa<0$, the function $-d_x^2$ is $K$-convex on balls $\bar{B}_x(r)$
for some $K=K(\kappa,r)<0$ by Proposition~\ref{P:Convex2}.
Hence we can construct the \emph{gradient flow} $\Phi:[0,\infty) \times X \to X$ of $-d_x^2$,
i.e., each curve $\xi(t)=\Phi(t,y)$ satisfies $\dot{\xi}(t)=\nabla(d_x^2)(\xi(t))$ at almost all $t>0$.
The convexity of $-d_x^2$ ensures the uniqueness and contraction of $\Phi$, see \cite{PP,Pet07}.
The \emph{gradient exponential map} $g\textrm{-\!}\exp_x:C_xX \to X$ is obtained by a re-parametrized scaling of $\Phi$:
Define $g\textrm{-\!}\exp_x$ as the limit of the map
\[ \Phi(s,\cdot) \circ \mathrm{id}_X:(X,e^s d) \to (X,d) \]
as $s \to \infty$, where $e^s d$ is the scaled distance and $\mathrm{id}_X:(X,e^s d) \to (X,d)$ is the identity map.
The gradient exponential map enjoys many nice properties, for instance,
the curve $\xi(t)=g\textrm{-\!}\exp_x(tv)$ satisfies
\begin{equation}\label{eq:gexp}
\dot{\xi}(0)=v,\quad \dot{\xi}(t)=\frac{d(x,\xi(t))}{t} \nabla d_x(\xi(t)), \quad d(x,\xi(t)) \le t|v|.
\end{equation}
Moreover, the following useful comparison estimate holds.

\begin{lemma}[Lemma~3.1.2 in \cite{Pet07}]\label{L:gexp}
Put $\xi(t)=g\textrm{-\!}\exp_x(tv)$ with $v \in C_x X$.
Then, for any $(-K)$-convex function $h:X \to \R$ with $K \ge 0$ and all $t>0$, we have
\[ h(\xi(t)) \ge h(x) +t D_x h(v) -\frac{K}{2}(t|v|)^2. \]
\end{lemma}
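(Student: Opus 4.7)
The plan is to prove the inequality by a Taylor-type expansion of $h$ along the curve $\xi(t) = g\textrm{-\!}\exp_x(tv)$, using $(-K)$-convexity of $h$ (i.e., $K$-semi-concavity) as the source of the quadratic error $-\tfrac{K}{2}(t|v|)^2$ and the identity $\dot{\xi}(0) = v$ from \eqref{eq:gexp} as the source of the linear term $tD_x h(v)$.

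First, I would verify an infinitesimal version. For small $\tau > 0$, a minimal geodesic $\gamma$ emanating from $x$ with initial direction close to $v/|v|$ and length $\tau|v|$ approximates the segment $\xi|_{[0,\tau]}$ appearing in the construction of $g\textrm{-\!}\exp_x$. Along $\gamma$, the $(-K)$-convexity of $h$, combined with the bound $d(x, \xi(\tau)) \le \tau|v|$ from \eqref{eq:gexp} and the definition of $D_x h(v)$, would yield
\[
h(\xi(\tau)) \ge h(x) + \tau D_x h(v) - \tfrac{K}{2}(\tau|v|)^2 + o(\tau).
\]
To promote this to arbitrary $t > 0$, I would set $\phi(s) := h(\xi(s))$ and show that $\phi$ is locally absolutely continuous with
\[
\phi'(s) \ge D_{\xi(s)} h\bigl(\dot{\xi}(s)\bigr)
\]
for almost every $s$, using that $h$ is locally Lipschitz on a bounded set (a standard consequence of $(-K)$-convexity in Alexandrov spaces of lower curvature bound). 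Integrating this differential inequality from $0$ to $t$, applying the uniform bound $|\dot{\xi}(s)| \le |v|$ from \eqref{eq:gexp}, and reorganizing via Theorem~\ref{T:1vf} to identify the leading term as $tD_x h(v)$, one arrives at the claim.

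The main technical obstacle is establishing the chain-rule inequality $\phi'(s) \ge D_{\xi(s)} h(\dot{\xi}(s))$, since $\xi$ is the gradient-exponential curve rather than a minimal geodesic, so that $(-K)$-convexity of $h$ does not directly control $\phi$. I would address this by working with the rescaled gradient flows $\Phi(s, \cdot) : (X, e^s d) \to (X, d)$ whose $s \to \infty$ limit defines $g\textrm{-\!}\exp$: prove the desired estimate for each finite $s$, where short minimal geodesic segments with prescribed initial direction are available and the infinitesimal computation above applies uniformly, and then pass to the limit using the local Lipschitz control of $h$ together with the equicontinuity and contraction properties of the approximating flows established in \cite{Pet07}. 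A subsidiary technicality is the lower semi-continuity of $v \mapsto D_x h(v)$ in its direction argument, required to pass the leading-order term through the limit cleanly.
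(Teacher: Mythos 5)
The paper gives no proof of this statement at all: it is imported verbatim as Lemma~3.1.2 of \cite{Pet07}, and Remark~\ref{R:gexp} even flags that the proof there essentially requires $\dim X<\infty$ and $\partial X=\emptyset$. So the real question is whether your sketch amounts to a valid self-contained proof, and it does not. The decisive gap is in the integration step. The chain-rule inequality $\phi'(s)\ge D_{\xi(s)}h(\dot{\xi}(s))$ a.e., even granted, only yields $h(\xi(t))-h(x)\ge\int_0^t D_{\xi(s)}h(\dot{\xi}(s))\,ds$; to reach the asserted conclusion you would additionally need the pointwise lower bound $D_{\xi(s)}h(\dot{\xi}(s))\ge D_xh(v)-Ks|v|^2$, i.e.\ a monotonicity of the difference quotient of $h$ along the radial curve $\xi$. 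That is not a ``reorganization'': it is precisely the substance of Petrunin's lemma (the \emph{monotonicity} estimate alluded to in Remark~\ref{R:gexp}), and nothing in your outline supplies it. In particular the appeal to Theorem~\ref{T:1vf} is a category error at this point: the first variation formula concerns distance functions $d(\cdot,y)$ and gives no control on the directional derivatives of a general $(-K)$-convex $h$ at the moving point $\xi(s)$, whose right tangent $\dot{\xi}(s)=\frac{d(x,\xi(s))}{s}\nabla d_x(\xi(s))$ points in the radial direction, where the quantity $D_xh(v)$ is simply not visible. Your infinitesimal estimate at $s=0$ is the easy part; the lemma \emph{is} the statement that it propagates along all of $\xi$ with only the quadratic loss $\frac{K}{2}(t|v|)^2$, and the uniform bounds $|\dot{\xi}(s)|\le|v|$ plus local Lipschitz continuity give only the useless estimate $|h(\xi(t))-h(x)|\le Lt|v|$.

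The same circularity infects your fallback plan: ``pass to the limit using \dots the contraction properties of the approximating flows established in \cite{Pet07}'' defers the hard content back to the very source being reproved. A workable route (the one in \cite{Pet07}, cf.\ also the monotonicity argument in the appendix of \cite{PP}) approximates $\xi$ by broken geodesics: on each short leg one applies the $(-K)$-convexity of $h$ together with the one-sided first-variation inequality for the \emph{distance} function only, telescopes, and passes to the limit, with the quadratic error aggregating through $d(x,\xi(s))\le s|v|$. If you want a proof rather than a citation, that discrete telescoping scheme --- not a differential inequality patched by Theorem~\ref{T:1vf} --- is what must be carried out; it is also exactly where $\dim X<\infty$ and $\partial X=\emptyset$ (local compactness, local Lipschitz continuity of semiconvex functions, density of geodesic directions in $\Sigma_xX$) genuinely enter, hypotheses your sketch uses silently.
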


\begin{remark}\label{R:gexp}
The gradient flow $\Phi$ of $-d_x^2$ can be constructed also in proper, infinite dimensional Alexandrov spaces
(see \cite[Appendix]{PP} and \cite{Ly,Oh1}).
However, the proof of Lemma~\ref{L:gexp} above in \cite{Pet07} essentially requires
both $\dim X<\infty$ and $\partial X=\emptyset$.
One may consult the argument in \cite[Appendix]{PP} proving an estimate comparable to Lemma~\ref{L:gexp}
(called the \emph{monotonicity} there) along gradient curves parametrized in a different way.
However, these curves may be defined only on small intervals.
\end{remark}

We are ready to define the resolvent map under lower curvature bound.
We abuse the same notation $J_{\lambda}^f(x)$ as the upper curvature bound case.

\begin{definition}[Resolvent map, lower curvature bound case]\label{D:Jlower}
For $x \in X$ with $|\nabla\!_- f|(x)<\infty$, we define
\begin{equation}\label{eq:resolventbelow}
J_{\lambda}^f(x) := g\textrm{-\!}\exp(\lambda\nabla(-f)(x)),
\end{equation}
where $\nabla(-f) \in C_xX$ is the gradient vector of $-f$ at $x$ given in Section~\ref{S:convex}.
\end{definition}

The following estimates will play crucial roles in the next section.

\begin{lemma}\label{L:1}
Let $(X,d)$ be a complete Alexandrov space either with curvature bounded above or below by $\kappa$.
\begin{enumerate}
\item[(I)]
If $(X,d)$ has curvature bounded above by $\kappa>0$, then also assume $\diam G<\pi/(2\sqrt{\kappa})$.
Then we have
\begin{equation}\label{eq:L:1.1}
d(y,J_{\lambda}^f(x))^2\leq d(y,x)^2-2\lambda[f(J_{\lambda}^f(x))-f(y)]
\end{equation}
for all $x,y \in G$.

\item[(II)]
In the lower curvature bound case, we assume that $X$ is finite dimensional, $\partial X=\emptyset$,
and that $\diam G<\infty$ if $\kappa<0$.
Then we have
\begin{equation}\label{eq:L:1.2}
d(y,J_{\lambda}^f(x))^2\leq d(y,x)^2-2\lambda[f(x)-f(y)]+\frac{K}{2}(\lambda |\nabla\!_- f|(x))^2
\end{equation}
for all $x,y \in G$ satisfying $J_{\lambda}^f(x) \in G$, where $K=K(\kappa,\diam G) \ge 0$.
\end{enumerate}
\end{lemma}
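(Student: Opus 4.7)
The plan is to treat both parts via a common template: combine optimality of $J_\lambda^f(x)$, convexity of $f$ along an auxiliary geodesic, and a curvature-specific bound on squared distance. The two parts differ in the nature of the curvature estimate used.

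For part (I), set $z := J_\lambda^f(x)$. For any $y\in G$, let $\gamma:[0,1]\to G$ be the minimal geodesic from $z$ to $y$ and $\eta$ the (unique, by the upper curvature bound together with the diameter hypothesis) minimal geodesic from $z$ to $x$. The optimality of $z$ forces
\[
\frac{d^+}{dt}\bigg|_{t=0}\bigg[f(\gamma(t))+\frac{1}{2\lambda}d(x,\gamma(t))^2\bigg] \ge 0.
\]
Convexity of $f$ bounds the derivative of the first term above by $f(y)-f(z)$, while the first variation formula (Theorem~\ref{T:1vf} in its \emph{equality} form, which is available because $\eta$ is unique) evaluates the derivative of $d(x,\gamma(t))^2$ at $t=0$ exactly as $-2\langle\dot\gamma(0),\dot\eta(0)\rangle$. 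Combining yields
\[
\langle\dot\gamma(0),\dot\eta(0)\rangle \le \lambda[f(y)-f(z)].
\]
The upper curvature bound $\angle_z(\gamma,\eta)\le\tilde\angle_\kappa(x,z,y)$ combined with the $\mathbb{M}^2(\kappa)$ cosine law then supplies a lower bound on the inner product in terms of the side lengths: for $\kappa\le 0$ this is the familiar reverse cosine estimate $d(x,y)^2\ge d(z,x)^2+d(z,y)^2-2\langle\dot\gamma(0),\dot\eta(0)\rangle$, which together with the previous display yields the even stronger $d(z,y)^2\le d(x,y)^2-d(z,x)^2-2\lambda[f(z)-f(y)]$, whence the claim after discarding the nonpositive term $-d(z,x)^2$. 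For $\kappa>0$ the same conclusion is to be obtained by a trigonometric translation of the spherical cosine law, keeping all angles in range via $\diam G<\pi/(2\sqrt{\kappa})$.

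For part (II), set $v:=\nabla(-f)(x)$ and $\xi(t):=g\textrm{-\!}\exp_x(tv)$, so that $J_\lambda^f(x)=\xi(\lambda)$. Proposition~\ref{P:Convex2} provides $K=K(\kappa,\diam G)\ge 0$ such that $h:=-d_y^2$ is $(-K)$-convex on $G$, and applying Lemma~\ref{L:gexp} to $h$ gives
\[
d(y,\xi(\lambda))^2 \le d(y,x)^2 - \lambda\,D_x(-d_y^2)(v) + \frac{K}{2}(\lambda|v|)^2.
\]
It remains to show $D_x(-d_y^2)(v)\ge 2[f(x)-f(y)]$, which I would split into two substeps. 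First, the first variation formula (inequality form, adequate in the lower curvature setting) together with the $\liminf$ defining $D_x$ yields $D_x(-d_y^2)(v)\ge 2\langle v,\dot\eta(0)\rangle$ for any minimal geodesic $\eta$ from $x$ to $y$. Second, feeding the unit direction $\gamma_\eta\in\Sigma_xX$ of $\eta$ into the gradient-vector inequality~\eqref{eq:grad} and using the convexity bound $D_xf(\gamma_\eta)\le[f(y)-f(x)]/d(x,y)$ gives $\langle v,\dot\eta(0)\rangle\ge f(x)-f(y)$. Substitution, together with $|v|=|\nabla\!_-f|(x)$, delivers the claim.

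The principal obstacles are distinct in the two parts. In (I), the $\kappa\le 0$ case is clean, but for $\kappa>0$ the passage from $\angle_z\le\tilde\angle_\kappa$ to a useful lower bound on $\langle\dot\gamma(0),\dot\eta(0)\rangle$ requires delicate bookkeeping with the spherical cosine law, where the diameter restriction is essential. In (II), the subtle point is the first substep $D_x(-d_y^2)(v)\ge 2\langle v,\dot\eta(0)\rangle$: since $v$ is a gradient vector that need not be the initial tangent of a genuine geodesic, the bound must be extracted by approximating $v$ through geodesic directions $(\gamma',s')\to v$ in the $\liminf$ defining $D_x$ and invoking continuity of the tangent-cone inner product.
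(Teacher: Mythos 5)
Your part (II) is, step for step, the paper's own proof: apply Lemma~\ref{L:gexp} to $h=-d_y^2$, which is $(-K)$-convex by Proposition~\ref{P:Convex2}, then bound the directional derivative term from below via the inequality form of the first variation formula together with the gradient-vector inequality \eqref{eq:grad}, and finish with convexity of $f$ along $x\#_ty$; your extra remark about approximating $v$ through geodesic directions in the $\liminf$ defining $D_x$ is a legitimate technical point that the paper leaves implicit. Part (I) likewise follows the paper's skeleton (first-order optimality of $z:=J_{\lambda}^f(x)$, the equality form of Theorem~\ref{T:1vf} justified by uniqueness of $\eta$, convexity of $f$ along $\gamma$), and your $\kappa\le 0$ case is complete -- indeed it yields the stronger estimate retaining the $-d(z,x)^2$ term.

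The gap is the $\kappa>0$ subcase of (I). You need the inequality $d(x,y)^2\ge d(z,y)^2-2\langle\dot\gamma(0),\dot\eta(0)\rangle$, and you propose to extract it from the angle comparison $\angle_z(\gamma,\eta)\le\tilde\angle_\kappa$ plus a ``trigonometric translation of the spherical cosine law.'' But the mechanism that worked for $\kappa\le 0$ does not carry over mechanically: there you used
\[
\cos\angle_z\ \ge\ \cos\tilde\angle_0\ =\ \frac{d(z,x)^2+d(z,y)^2-d(x,y)^2}{2\,d(z,x)\,d(z,y)},
\]
whereas comparison angles are monotone non-decreasing in $\kappa$, so for $\kappa>0$ one has $\cos\tilde\angle_\kappa\le\cos\tilde\angle_0$ -- the spherical cosine law bounds the inner product from the wrong side relative to the Euclidean quadratic expression, and whether the needed inequality survives depends quantitatively on the diameter restriction. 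In fact the inequality you want is precisely the first-order convexity inequality $h(1)\ge h(0)+h'(0^+)$ for $h(t)=d(y,\eta(t))^2$, i.e., the convexity of $d_y^2$ on sets of diameter less than $\pi/(2\sqrt{\kappa})$, which is a genuine result (Proposition~\ref{P:Convex}, quoted from [Oh2]) rather than routine bookkeeping. This is exactly how the paper argues: it invokes Proposition~\ref{P:Convex} and the first variation formula along $\eta$, which handles all $\kappa$ uniformly. Replacing your unexecuted spherical computation by that citation closes the gap with no other change to your argument.
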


\begin{proof}
(I) By assumption, the squared distance function is convex (Proposition~\ref{P:Convex}).
Hence, by Theorem~\ref{T:1vf},
\[ d(y,x)^2 \ge d(y,J_{\lambda}^f(x))^2
 -2\left\langle \dot{\gamma}(0),\dot{\eta}(0) \right\rangle \]
for $\gamma(t)=J_{\lambda}^f(x) \#_t y$ and $\eta(t)=J_{\lambda}^f(x) \#_t x$
(they are unique since $\diam G<\pi/(2\sqrt{\kappa})$).
Combine this with \eqref{eq:optimum} to get
\begin{equation}\label{eq:eq1}
d(y,J_{\lambda}^f(x))^2\leq d(y,x)^2+2\lambda D_{J_{\lambda}^f(x)}f(\dot{\gamma}(0)).
\end{equation}
Now the convexity of $f$ along $\gamma$ yields that $D_{J_{\lambda}^f(x)}f(\dot{\gamma}(0))\leq f(y)-f(J_{\lambda}^f(x))$,
so we get from \eqref{eq:eq1} that
\begin{equation*}
d(y,J_{\lambda}^f(x))^2\leq d(y,x)^2-2\lambda[f(J_{\lambda}^f(x))-f(y)].
\end{equation*}

(II) Put $\xi(\lambda):=g\textrm{-\!}\exp(\lambda\nabla(-f)(x))$.
By Proposition~\ref{P:Convex2}, the function $-d_y^2$ is $(-K)$-convex on $G$ for some $K=K(\kappa,\diam G) \ge 0$.
Thus Lemma~\ref{L:gexp} shows that
\[ d(y,\xi(\lambda))^2 \le d(y,x)^2 +\lambda D_x(d_y^2)(\nabla(- f)(x))
 +\frac{K}{2}(\lambda |\nabla\!_- f|(x))^2. \]
Fixing arbitrary minimal geodesic $\gamma(t)=x\#_t y$,
we deduce from the first variation formula (Theorem~\ref{T:1vf}) and \eqref{eq:grad} that
\[ D_x(d_y^2)(\nabla(-f)(x)) \le -2\langle \dot{\gamma}(0),\nabla(-f)(x) \rangle
 \le 2D_x f(\dot{\gamma}(0)). \]
Finally the convexity of $f$ shows that $f(y) \ge f(x) +D_x f(\dot{\gamma}(0))$.
Therefore we obtain
\[ d(y,J_{\lambda}^f(x))^2\leq d(y,x)^2-2\lambda[f(x)-f(y)]+\frac{K}{2}(\lambda |\nabla\!_- f|(x))^2. \]
\end{proof}

\section{Proximal and sub-gradient methods}\label{S:nodice}

The resolvent map $J_{\lambda}^f$ can be used to consider proximal point algorithms or,
in other words, discrete-time gradient flows for general convex functions in the upper curvature bound case.
We start with a basic result that generalizes the one in \cite{Ba1} given in NPC spaces.
The algorithm has been used at many places, one of the first occasions was in \cite{Brezis}.
The situation is the same as Lemma~\ref{L:1}(I).

\begin{theorem}\label{T:3}
Let $(X,d)$ be a complete Alexandrov space with curvature bounded above by $\kappa$.
Let $f:X\to (-\infty,\infty]$ be a convex, lower semi-continuous function
and $G \subset X$ be a closed, geodesically convex set containing a sublevel set of $f$
such that $\diam G<\pi/(2\sqrt{\kappa})$ if $\kappa>0$.
Take a positive sequence $\{\lambda_k\}_{k \ge 1}$ with $\sum_{k=1}^\infty\lambda_k=+\infty$.
Fix an arbitrary starting point $x_0\in G$ and put
\[ x_{k}:=J_{\lambda_k}^f(x_{k-1}), \qquad k \ge 1. \]
Then we have $\lim_{k \to \infty}f(x_k)=\inf_{y \in G}f(y)$.
\end{theorem}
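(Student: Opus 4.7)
The plan is to use Lemma~\ref{L:1}(I) as a one-step energy/distance identity and extract two things from it: first, monotonicity of $f(x_k)$; second, a summable decrement that, combined with $\sum_k\lambda_k=+\infty$, forces $f(x_k)$ down to $\inf_G f$. The argument parallels the classical Br\'ezis--Martinet--Rockafellar proximal point convergence, and it is well suited to our setting because the defining property \eqref{eq:L:1.1} already looks like a discrete analogue of $\frac{d}{dt}\tfrac12 d(y,x_t)^2\le f(y)-f(x_t)$.

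First I would record the trivial but essential point that $x_k\in G$ for every $k$, which is immediate from the definition \eqref{eq:resolventabove} of $J^f_{\lambda_k}$ as an $\mathrm{argmin}$ over $G$; this legitimates the application of Lemma~\ref{L:1}(I). Next I would apply \eqref{eq:L:1.1} with $y=x_{k-1}$ and $x=x_{k-1}$ (so that $J^f_{\lambda_k}(x)=x_k$), obtaining
\[ 0\le d(x_{k-1},x_k)^2\le -2\lambda_k\bigl[f(x_k)-f(x_{k-1})\bigr]. \]
Since $\lambda_k>0$, this gives $f(x_k)\le f(x_{k-1})$, so the sequence $\{f(x_k)\}$ is non-increasing and hence converges to some limit $L\in[-\infty,f(x_0)]$.

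The crux is then to rule out $L>\inf_G f$. Suppose, for contradiction, $L>\inf_G f$. Pick $y\in G$ with $f(y)<L$; in particular $f(x_k)-f(y)\ge L-f(y)=:\delta>0$ for every $k$. Applying Lemma~\ref{L:1}(I) with this fixed $y$ and $x=x_{k-1}$ yields
\[ d(y,x_k)^2 \le d(y,x_{k-1})^2 -2\lambda_k\bigl[f(x_k)-f(y)\bigr], \]
and telescoping from $k=1$ to $N$ produces
\[ 2\sum_{k=1}^N \lambda_k\bigl[f(x_k)-f(y)\bigr] \le d(y,x_0)^2 -d(y,x_N)^2 \le d(y,x_0)^2. \]
The left-hand side is bounded below by $2\delta\sum_{k=1}^N\lambda_k$, which diverges as $N\to\infty$ by hypothesis, while the right-hand side is a fixed finite quantity. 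This contradiction forces $L=\inf_G f$, completing the proof.

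I do not expect a real obstacle in this argument; the only mildly delicate checks are that $y$ with $f(y)<L$ genuinely lies in $G$ (which is exactly the content of ``$G$ contains a sublevel set of $f$'' when $L>\inf_G f$, since then there is a sublevel set $\{f<L\}\cap G$ that is nonempty) and that Lemma~\ref{L:1}(I) applies uniformly along the iterates, which is ensured by $x_k\in G$. The divergence condition $\sum_k\lambda_k=+\infty$ is used exactly once, at the final contradiction step, and the role of the upper curvature bound is entirely absorbed into Lemma~\ref{L:1}(I).
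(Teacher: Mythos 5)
Your proposal is correct and takes essentially the same route as the paper: both derive monotonicity of $f(x_k)$ from the resolvent's minimizing property and telescope the one-step inequality \eqref{eq:L:1.1} of Lemma~\ref{L:1}(I) against a fixed $y\in G$, using $\sum_k\lambda_k=+\infty$ to conclude. The only cosmetic difference is that the paper states the quantitative bound $f(x_k)-f(y)\leq d(y,x_0)^2/\bigl(2\sum_{i=1}^k\lambda_i\bigr)$ directly rather than arguing by contradiction (and, a minor point, the existence of $y\in G$ with $f(y)<L$ follows from the definition of $\inf_G f$ alone; the sublevel-set hypothesis is there to make $J^f_{\lambda}$ well defined, not for that step).
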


\begin{proof}
By the definition \eqref{eq:resolventabove} of $J_{\lambda}^f$, the sequence $f(x_k)$ is monotone non-increasing.
Indeed, we have
\[ f(J_{\lambda_{k+1}}^f(x_k))+\frac{1}{2\lambda_{k+1}}d(x_k,J_{\lambda_{k+1}}^f(x_k))^2\leq f(x_k). \]
Furthermore, by \eqref{eq:L:1.1} in Lemma~\ref{L:1}, we have for any $y \in G$
\[ d(y,x_{k+1})^2\leq d(y,x_k)^2-2\lambda_{k+1}[f(x_{k+1})-f(y)]. \]
This combined with the monotonicity of $f(x_k)$ yields
\[ 2[f(x_k)-f(y)]\sum_{i=1}^k\lambda_i\leq 2\sum_{i=1}^k\lambda_i[f(x_i)-f(y)]\leq d(y,x_0)^2-d(y,x_k)^2, \]
which gives
\[ f(x_k)-f(y)\leq \frac{d(y,x_0)^2}{2\sum_{i=1}^k\lambda_i}. \]
By the choice of $\lambda_k$, this implies that $\lim_{k \to \infty}f(x_k) \le f(y)$ for any $y \in G$.
Therefore we obtain $\lim_{k \to \infty}f(x_k)=\inf_{y \in G}f(y)$.
\end{proof}

\begin{remark}
Weak convergence in Alexandrov spaces with upper curvature bounds has been introduced in \cite{Esp09} which generalized this notion given for NPC spaces by Jost in \cite{Jost-Calc}.
The same results for weak convergence as in NPC spaces hold
if we restrict the analysis to closed metric balls of diameter at most $\pi/(2\sqrt{\kappa})$.
Hence actually one can prove weak convergence to a minimizer (if it exists) in Theorem~\ref{T:3}
in the same way as in \cite{Ba1} for NPC spaces.
Furthermore, if $f$ is $K$-convex with $K>0$, then we have (strong) convergence to the unique minimizer $y$.
\end{remark}

In the rest of this section, we set up a discrete-time gradient flow converging to a minimizer of a convex function
that is the sum of finitely many convex functions.
We adjust the setting of Lemma~\ref{L:1} to admit such sum of functions.

\begin{definition}[Proximal Point Algorithm]\label{D:alg1}
Let $(X,d)$ be a complete Alexandrov space either with curvature bounded above or below by $\kappa$,
and $G \subset X$ be a closed, geodesically convex set satisfying the following:
\begin{enumerate}
\item[(I)]
In the upper curvature bound case, $\diam G<\pi/(2\sqrt{\kappa})$ if $\kappa>0$;

\item[(II)]
In the lower curvature bound case, $\dim X<\infty$, $\partial X=\emptyset$,
and $\diam G<\infty$ if $\kappa<0$.
\end{enumerate}
Let $f_i:G\to (-\infty,\infty]$ be a convex, lower semi-continuous function for $i=1,\ldots,n$.
Set $f(x):=\sum_{i=1}^n f_i(x)$ and suppose that it is not identically $+\infty$.
Take a positive sequence $\lambda_k>0$ such that $\sum_{k=0}^{\infty}\lambda_k=+\infty$
and also $\sum_{k=0}^{\infty}\lambda_k^2<+\infty$.
Given $x_0 \in G$ and for each $k \ge 0$ and $1 \le i \le n$, we set
\[ x_{kn+i}:=J_{\lambda_k}^{f_i}(x_{kn+i-1}), \]
where the resolvent map is defined by \eqref{eq:resolventabove} or \eqref{eq:resolventbelow},
assuming that $x_m \in G$ for all $m \ge 0$ in the lower curvature bound case.
\end{definition}

Before turning to our result on the convergence of the sequences generated in Definition~\ref{D:alg1},
we state an elementary lemma from \cite[Lemma~3.4]{Bert1} for later use.

\begin{lemma}\label{L:2}
Let $a_k,b_k,c_k\geq 0$ be sequences such that $a_{k+1}\leq a_k-b_k+c_k$ for any $k \ge 1$,
and assume $\sum_{k=1}^{\infty}c_k<\infty$.
Then the sequence $a_k$ converges and also $\sum_{k=1}^{\infty}b_k<\infty$.
\end{lemma}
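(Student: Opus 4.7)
The plan is to derive both conclusions from a single telescoping of the hypothesis $a_{k+1}\le a_k - b_k + c_k$, exploiting that $a_k\ge 0$ gives one-sided control and that $\sum_k c_k$ is finite.

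First I would rearrange the assumption as $b_k \le a_k - a_{k+1} + c_k$ and sum from $k=1$ to $N$:
\[
\sum_{k=1}^N b_k \;\le\; a_1 - a_{N+1} + \sum_{k=1}^N c_k \;\le\; a_1 + \sum_{k=1}^\infty c_k,
\]
where the last step uses $a_{N+1}\ge 0$. Since the right-hand side is a finite constant independent of $N$ and the partial sums of $b_k$ are monotone non-decreasing, this already yields $\sum_{k=1}^\infty b_k <\infty$.

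For the convergence of $a_k$ itself, monotonicity is not available directly, so I would introduce the auxiliary sequence
\[
\tilde{a}_k := a_k + \sum_{j=k}^{\infty} c_j,
\]
which is well defined and non-negative because $\sum c_j<\infty$. A short computation using the hypothesis gives
\[
\tilde{a}_{k+1} = a_{k+1} + \sum_{j=k+1}^\infty c_j \;\le\; (a_k - b_k + c_k) + \sum_{j=k+1}^\infty c_j \;=\; \tilde{a}_k - b_k \;\le\; \tilde{a}_k,
\]
so $\tilde{a}_k$ is monotone non-increasing and bounded below by $0$, hence converges to some limit $L\ge 0$. Because $\sum_{j=k}^\infty c_j \to 0$ as $k\to\infty$, we conclude $a_k \to L$ as well.

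There is essentially no hard step here; the only minor subtlety is recognizing that the telescoping alone does not give monotonicity of $a_k$, so one needs the trick of absorbing the tail $\sum_{j\ge k} c_j$ into an auxiliary quantity to force monotonicity. Once that adjustment is made, both conclusions follow from completeness of $\mathbb{R}$ and non-negativity of the sequences.
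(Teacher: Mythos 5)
Your proof is correct, and there is nothing in the paper to compare it against line by line: the paper states this lemma without proof, citing it as Lemma~3.4 of Bertsekas--Tsitsiklis \cite{Bert1}. Your argument is the standard one for such quasi-Fej\'er-type inequalities and matches the classical proof in that reference: the telescoping bound $\sum_{k=1}^N b_k \le a_1 - a_{N+1} + \sum_{k=1}^N c_k \le a_1 + \sum_{k=1}^\infty c_k$ settles the summability of $b_k$, and the auxiliary sequence $\tilde{a}_k = a_k + \sum_{j\ge k} c_j$ correctly repairs the lack of monotonicity of $a_k$, since $\tilde{a}_{k+1} \le \tilde{a}_k - b_k \le \tilde{a}_k$ and $\tilde{a}_k \ge 0$ give convergence of $\tilde{a}_k$, while the vanishing tail $\sum_{j\ge k} c_j \to 0$ transfers the limit to $a_k$. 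You have in effect supplied the proof the paper outsources, with no gaps.
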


\begin{theorem}\label{T:1}
Let $(X,d)$, $G \subset X$, $f=\sum_{i=1}^n f_i$ and $\{\lambda_k\}_{k \ge 0}$ be as in Definition~$\ref{D:alg1}$.
Assume further that $X$ is locally compact, $f_i$ is $L$-Lipschitz for some $L \ge 1$ and all $i$,
and that $\inf_G f$ is attained at some point.
Then $x_m$ converges to some minimizer of $f$ in $G$ as $m \to \infty$.
\end{theorem}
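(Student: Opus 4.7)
The plan is to prove a nearly Fejér-monotone inequality for the sequence $d(y^*,x_{kn})^2$, where $y^* \in G$ is a minimizer of $f$, apply the elementary Lemma~\ref{L:2} to extract summability of the functional gaps, and then use local compactness together with lower semi-continuity to identify a subsequential limit as a minimizer. Finally, I would upgrade subsequential convergence to full convergence by re-applying the Fejér-type inequality centered at that limit.

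The first technical step is to produce a per-cycle estimate of the form
\begin{equation*}
d(y,x_{(k+1)n})^2 \le d(y,x_{kn})^2 - 2\lambda_k[f(x_{kn})-f(y)] + C\lambda_k^2
\end{equation*}
for every $y \in G$, with $C$ depending only on $n$, $L$ and (in the lower curvature bound case) on $K = K(\kappa,\diam G)$. To obtain it, I apply Lemma~\ref{L:1} at each inner step $x_{kn+i} = J_{\lambda_k}^{f_i}(x_{kn+i-1})$ and sum in $i$. This yields $\sum_{i=1}^n$ terms of the form $-2\lambda_k[f_i(x_{kn+i})-f_i(y)]$ in the upper case and $-2\lambda_k[f_i(x_{kn+i-1})-f_i(y)]$ in the lower case. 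To collapse these to the single quantity $f(x_{kn})-f(y)$, I need uniform control on the one-step displacements $d(x_{kn+i-1},x_{kn+i})$. In the upper case this comes from the defining minimization combined with the $L$-Lipschitz bound, giving $d(x_{kn+i-1},x_{kn+i}) \le 2\lambda_k L$; in the lower case the same displacement is at most $\lambda_k |\nabla_{-}f_i|(x_{kn+i-1}) \le \lambda_k L$ by~\eqref{eq:gexp} and the Lipschitz bound on the absolute gradient. Either way, the $L$-Lipschitz property of each $f_i$ turns the mismatch between $f_i(x_{kn+i})$ (or $f_i(x_{kn+i-1})$) and $f_i(x_{kn})$ into an $O(\lambda_k)$ error, producing the $O(\lambda_k^2)$ residual $C\lambda_k^2$ after multiplication by $\lambda_k$ and summing over $i$. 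In the lower case there is an additional $\sum_i \tfrac{K}{2}(\lambda_k |\nabla_{-}f_i|(x_{kn+i-1}))^2 \le \tfrac{nK L^2}{2}\lambda_k^2$ contribution, which is absorbed into $C\lambda_k^2$.

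Setting $y = y^*$, I obtain $a_{k+1} \le a_k - b_k + c_k$ with $a_k = d(y^*,x_{kn})^2$, $b_k = 2\lambda_k[f(x_{kn})-f(y^*)] \ge 0$ and $c_k = C\lambda_k^2$. Since $\sum c_k < \infty$ by hypothesis, Lemma~\ref{L:2} gives that $a_k$ converges and $\sum_k \lambda_k[f(x_{kn})-f(y^*)] < \infty$. Combined with $\sum \lambda_k = \infty$, this forces $\liminf_k f(x_{kn}) = f(y^*)$. Since $a_k$ is bounded, the sequence $\{x_{kn}\}$ lies in a bounded set of $X$; by local compactness (and completeness) applied to a large closed ball, one can extract a subsequence $x_{k_j n} \to x^\infty$, and lower semi-continuity of $f$ gives $f(x^\infty) \le \liminf_j f(x_{k_j n}) = f(y^*)$, so $x^\infty$ is itself a minimizer in $G$.

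To pass from subsequential to full convergence, I re-apply the Fejér-type inequality with $y = x^\infty$. The same argument shows that $d(x^\infty,x_{kn})^2$ converges as $k\to\infty$; since it admits a subsequence tending to $0$, the full sequence $x_{kn}$ converges to $x^\infty$. The inner iterates $x_{kn+i}$, $1 \le i \le n-1$, then also converge to $x^\infty$ because $d(x_{kn},x_{kn+i}) \le 2nL\lambda_k \to 0$ (using $\lambda_k \to 0$, which follows from $\sum \lambda_k^2 < \infty$). The main obstacle is the per-cycle bookkeeping in the lower curvature bound case, where one must simultaneously verify the hypothesis $x_m \in G$ throughout, control the absolute gradient via the Lipschitz constant, and absorb the positive $K$-error arising from the weaker concavity estimate in Proposition~\ref{P:Convex2}; getting the constant $C$ explicit and curvature-independent in form (while allowing $K$ to depend on $\diam G$) is the most delicate part of the argument.
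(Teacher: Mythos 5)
Your proposal is correct and follows essentially the same route as the paper's own proof: the per-cycle Fej\'er-type inequality obtained by applying Lemma~\ref{L:1} at each inner step with the identical displacement bounds ($2\lambda_k L$ in the upper case from the defining minimization, $\lambda_k L$ via \eqref{eq:gexp} and $|\nabla\!_- f_i|\le L$ in the lower case), Lemma~\ref{L:2} to extract summability of the functional gaps, local compactness plus lower semi-continuity to identify a subsequential limit as a minimizer, re-centering the inequality at that limit for full convergence of $x_{kn}$, and the displacement estimate with $\lambda_k\to 0$ to carry the inner iterates along. The only cosmetic difference is that the paper keeps the residual constant explicit ($2L^2n(n+1)$, resp.\ $L^2n(\frac{K}{2}+n-1)$) rather than absorbing it into a generic $C\lambda_k^2$, and treats the hypothesis $x_m\in G$ as assumed in Definition~\ref{D:alg1} (addressed in a separate remark) rather than as part of the proof.
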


\begin{proof}
Fix a minimizer $y \in G$ of $f$.

Upper curvature bound case (I):
By \eqref{eq:L:1.1} in Lemma~\ref{L:1}, we have
\[ d(y,x_{kn+i})^2\leq d(y,x_{kn+i-1})^2-2\lambda_k[f_i(x_{kn+i})-f_i(y)]. \]
Summing the above for $1\leq i\leq n$ implies
\[ d(y,x_{kn+n})^2\leq d(y,x_{kn})^2-2\lambda_k\sum_{i=1}^n[f_i(x_{kn+i})-f_i(y)], \]
which is equivalent to
\begin{equation}\label{eq:T:1.1}
d(y,x_{kn+n})^2 \leq
 d(y,x_{kn})^2-2\lambda_k\sum_{i=1}^n[f_i(x_{kn})-f_i(y)]+2\lambda_k\sum_{i=1}^n[f_i(x_{kn})-f_i(x_{kn+i})].
\end{equation}
The next step is to estimate $\sum_{i=1}^n[f_i(x_{kn})-f_i(x_{kn+i})]$ from above.
By \eqref{eq:resolventabove}, for any $1\leq j\leq n$, we have
\[ f_j(x_{kn+j})+\frac{1}{2\lambda_k}d(x_{kn+j},x_{kn+j-1})^2\leq f_j(x_{kn+j-1}), \]
which yields by using the $L$-Lipschitz continuity that
\[ d(x_{kn+j},x_{kn+j-1})\leq
 2\lambda_k\frac{f_j(x_{kn+j-1})-f_j(x_{kn+j})}{d(x_{kn+j},x_{kn+j-1})}\leq 2\lambda_k L. \]
Since $d(x_{kn},x_{kn+i})\leq \sum_{j=1}^{i}d(x_{kn+j-1},x_{kn+j})$, this gives also that
\begin{equation}\label{eq:T:1.2}
d(x_{kn},x_{kn+i})\leq 2\lambda_k Li.
\end{equation}
Furthermore, we have
\begin{align*}
\sum_{i=1}^n[f_i(x_{kn})-f_i(x_{kn+i})] &=\sum_{i=1}^n\sum_{j=1}^{i}[f_i(x_{kn+j-1})-f_i(x_{kn+j})]\\
&\leq \sum_{i=1}^n\sum_{j=1}^{i}Ld(x_{kn+j},x_{kn+j-1})\\
&\leq \sum_{i=1}^n2\lambda_k L^2i=\lambda_k L^2n(n+1).
\end{align*}
This combined with \eqref{eq:T:1.1} yields
\begin{equation}\label{eq:T:1.3}
d(y,x_{kn+n})^2\leq d(y,x_{kn})^2-2\lambda_k\sum_{i=1}^n[f_i(x_{kn})-f_i(y)]+2\lambda_k^2 L^2n(n+1).
\end{equation}
Since $f(x_{kn})-f(y) \ge 0$, Lemma~\ref{L:2} implies that the sequence $a_k:=d(y,x_{kn})^2$ converges and
\[ \sum_{k=0}^\infty\lambda_k[f(x_{kn})-f(y)]<+\infty. \]
Hence, by the assumption $\sum_{k=0}^\infty \lambda_k=+\infty$, there exists a subsequence $x_{k_ln}$ such that
$\lim_{l \to \infty}f(x_{k_ln})=f(y)$.
Since $x_{k_ln}$ is bounded, by local compactness it has a subsequence converging to a point $z \in G$,
which by lower semicontinuity of $f$ must be a minimizer of $f$.
Then, by replacing $y$ with $z$ in the above discussion,
the sequence $a_k=d(z,x_{kn})^2$ is convergent and has a subsequence converging to $0$.
Hence the whole sequence $a_k$ converges to $0$, i.e., $x_{kn}\to z$ as $k\to\infty$.
Moreover, \eqref{eq:T:1.2} gives
\[ d(z,x_{kn+i})\leq d(z,x_{kn})+d(x_{kn},x_{kn+i})\leq d(z,x_{kn})+2\lambda_k Li \]
for all $1\leq i\leq n$.
Since we have $\lambda_k\to 0$ by $\sum_{k=0}^{\infty}\lambda_k^2<+\infty$,
we conclude that $x_{kn+i}\to z$ as $k\to\infty$ for all $i$.
Therefore $x_m \to z$ as $m \to \infty$.

Lower curvature bound case (II):
The proof is similar to Case (I).
From \eqref{eq:L:1.2} in Lemma~\ref{L:1}, we get
\[ d(y,x_{kn+i})^2\leq d(y,x_{kn+i-1})^2-2\lambda_k[f_i(x_{kn+i-1})-f_i(y)]+\frac{K}{2}\lambda_k^2L^2. \]
Summing the above for $1\leq i\leq n$ yields
\[ d(y,x_{kn+n})^2\leq d(y,x_{kn})^2-2\lambda_k\sum_{i=1}^n[f_i(x_{kn+i-1})-f_i(y)]+\frac{K}{2}\lambda_k^2L^2 n, \]
which is equivalent to
\begin{equation}\label{eq:T:1.4}
\begin{split}
d(y,x_{kn+n})^2\leq &d(y,x_{kn})^2-2\lambda_k\sum_{i=1}^n[f_i(x_{kn})-f_i(y)]\\
 &+2\lambda_k\sum_{i=1}^n[f_i(x_{kn})-f_i(x_{kn+i-1})]
 +\frac{K}{2}\lambda_k^2L^2 n.
\end{split}
\end{equation}
We find by \eqref{eq:gexp} and assumption that $d(x_{kn+i-1},x_{kn+i})\leq \lambda_k L$,
and hence
\[ f_i(x_{kn})-f_i(x_{kn+i-1})\leq
 Ld(x_{kn},x_{kn+i-1})\leq L\sum_{j=1}^{i-1}d(x_{kn+j-1},x_{kn+j})\leq \lambda_k L^2(i-1). \]
Then these bounds combined with \eqref{eq:T:1.4} give
\[ d(y,x_{kn+n})^2\leq
 d(y,x_{kn})^2-2\lambda_k\sum_{i=1}^n[f_i(x_{kn})-f_i(y)]+\lambda_k^2L^2n\left(\frac{K}{2}+n-1\right). \]
The rest of the argument is identical to Case (I).
\end{proof}

\begin{remark}
In the lower curvature bound case (II), the assumption that $x_m \in G$ can be met,
since $d(y,x_m)$ is bounded as we saw in the proof.
Thus, choosing the sequence $\lambda_k$ appropriately, we can assure that $x_m$ stays inside $G$.
\end{remark}

The above theorem relies on local compactness.
In fact, it is known that in the infinite dimensional case we cannot always have convergence under these assumptions \cite{Ba2}.
However, if we assume that $f$ is $K$-convex for positive $K$, then the assumption of local compactness can be dropped.

\begin{proposition}\label{P:1}
Let $(X,d)$, $G \subset X$, $f=\sum_{i=1}^n f_i$ be as in Definition~$\ref{D:alg1}$
and further assume that $f_i$ is $L$-Lipschitz for some $L \ge 1$ and all $i$,
and that $f$ is $K$-convex for some $K>0$.
Take $\lambda_k>0$ with $\lambda_kK<1$, $\lambda_k\to 0$ and $\sum_{k=0}^\infty\lambda_k=+\infty$,
and consider a sequence $\{x_m\}_{m \ge 0}$ generated by Definition~$\ref{D:alg1}$.
Then $x_m$ converges to the unique minimizer $y \in G$ of $f$ as $m \to \infty$.

More concretely, in the upper curvature bound case, $d(x_{kn},y)^2\leq a_k$ holds with
$a_0:=d(x_0,y)^2$ and $a_{k+1}:=(1-\lambda_kK)a_k+2\lambda_k^2 L^2n(n+1)$ inductively, that is,
\[ a_{k+1}=\prod_{i=0}^k(1-\lambda_iK)a_0
 +2L^2n(n+1)\sum_{j=1}^k(\lambda_{j-1}^2\prod_{i=j}^k(1-\lambda_iK)+\lambda_k^2). \]
In the lower curvature bound case, $d(x_{kn},y)^2\leq a_k$ similarly holds for
$a_0:=d(x_0,y)^2$ and $a_{k+1}:=(1-\lambda_kK)a_k+\lambda_k^2 L^2n(\frac{K}{2}+n-1)$
with $K \ge 0$ given in Lemma~$\ref{L:1}${\rm (II)}.
Also
\[ a_{k+1}=\prod_{i=0}^k(1-\lambda_iK)a_0
 +L^2n \left(\frac{K}{2}+n-1 \right) \sum_{j=1}^k(\lambda_{j-1}^2\prod_{i=j}^k(1-\lambda_iK)+\lambda_k^2) \]
in this case.
\end{proposition}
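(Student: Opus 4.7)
The plan is to take the one-step estimates already extracted inside the proof of Theorem~\ref{T:1} and, via the $K$-convexity of $f$, upgrade them into a genuine contraction with a summable noise term. This turns the subsequence-plus-local-compactness argument of Theorem~\ref{T:1} into a direct contraction estimate with an explicit rate, which is exactly what the proposition asserts.

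First I would isolate from the proof of Theorem~\ref{T:1} the intermediate inequality
\[
d(y,x_{kn+n})^2 \le d(y,x_{kn})^2 - 2\lambda_k\sum_{i=1}^n[f_i(x_{kn})-f_i(y)] + C\lambda_k^2,
\]
with $C = 2L^2 n(n+1)$ in the upper curvature case and $C = L^2 n\bigl(\tfrac{K'}{2}+n-1\bigr)$ in the lower curvature case, where $K' = K'(\kappa,\diam G)$ is the Lemma~\ref{L:1}(II) constant (notationally distinct from the convexity constant $K$). Next, applying the $K$-convexity inequality \eqref{k-convexity} along a minimal geodesic $y\#_t x_{kn}$ and using $f(y\#_t x_{kn}) \ge f(y)$ yields, after letting $t \to 1$, the pointwise lower bound $f(x)-f(y) \ge \tfrac{K}{2}d(x,y)^2$ for every $x \in G$. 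Substituting this into the preceding estimate produces the key one-step contraction
\[
d(y,x_{kn+n})^2 \le (1-\lambda_k K)\,d(y,x_{kn})^2 + C\lambda_k^2.
\]
Existence and uniqueness of the minimizer $y$ are standard: uniqueness is immediate from strict convexity, and existence follows because $K$-convexity forces every minimizing sequence to be Cauchy.

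Defining $a_0 := d(x_0,y)^2$ and $a_{k+1} := (1-\lambda_k K)a_k + C\lambda_k^2$, an induction gives $d(x_{kn},y)^2 \le a_k$ (the hypothesis $\lambda_k K < 1$ keeps $1-\lambda_k K$ positive), and a straightforward unrolling of this linear recursion produces the closed-form expression displayed in the statement. To prove $a_k \to 0$, I would observe that the factor $\prod_{i=0}^k (1-\lambda_i K)$ tends to $0$ by $\sum \lambda_i = \infty$, and that the weighted sum $\sum_{j=1}^k \lambda_{j-1}^2\prod_{i=j}^k(1-\lambda_i K)$ tends to $0$ by a standard Silverman--Toeplitz-type splitting: the noise-to-drift ratio $C\lambda_k/K \to 0$, so cutting the sum at a large $j^\star$ dominates the tail by $C\lambda_{j^\star}/K$, while the head is killed by $\prod_{i=j^\star}^k(1-\lambda_i K) \to 0$ as $k \to \infty$. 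This gives $x_{kn} \to y$, and the intra-block bounds $d(x_{kn},x_{kn+i}) \le 2\lambda_k L i$ in case (I) and $d(x_{kn},x_{kn+i}) \le \lambda_k L i$ in case (II) (the latter via \eqref{eq:gexp}) then extend convergence to $x_m \to y$.

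The only step deserving real care is the asymptotic analysis of the recursion $a_{k+1} = (1-\lambda_k K)a_k + C\lambda_k^2$: the double appearance of $\lambda_k$ in both contraction and noise is the classical delicate point in such stochastic-approximation-style schemes, since $\lambda_k$ must be small enough to absorb the noise yet large enough (in the sense $\sum \lambda_k = \infty$) to make the contraction actually bite. The assumptions $\lambda_k K<1$, $\lambda_k\to 0$ and $\sum\lambda_k=\infty$ are precisely what the Silverman--Toeplitz argument needs, so no further hypothesis is required and, crucially, no local compactness enters anywhere.
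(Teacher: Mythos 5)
Your proposal is correct and follows essentially the same route as the paper: you reuse the intermediate inequality \eqref{eq:T:1.3} (and its lower curvature analogue with the constant $L^2n(\frac{K}{2}+n-1)$, correctly distinguishing the Lemma~\ref{L:1}(II) constant from the convexity constant) from the proof of Theorem~\ref{T:1}, convert it into the one-step contraction \eqref{eq:C:2} via the variance-type bound \eqref{eq:P:1}, induct to get $d(x_{kn},y)^2\le a_k$, and finish with the intra-block bounds of type \eqref{eq:T:1.2} --- exactly the paper's structure. The only real divergence is the terminal limit argument: the paper proves $a_k\to 0$ directly from the recursion, first showing $\liminf_k a_k=0$ by contradiction using $\sum_k\lambda_k=+\infty$, then upgrading to a full limit via the observation $a_{k+1}\le\max\{a_k,\,C\lambda_k/K\}$, whereas you unroll the recursion and kill the closed-form expression by a telescoping/splitting (Silverman--Toeplitz-type) argument; both are valid and of comparable length, yours being the standard stochastic-approximation route, the paper's avoiding the closed form altogether. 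You also argue existence and uniqueness of $y$ by the Cauchy-minimizing-sequence argument where the paper simply cites \cite{ags}; that is precisely the content of the citation. One trivial slip worth fixing: with your parametrization $y\#_t x_{kn}$, the bound $f(x)-f(y)\ge\frac{K}{2}d(x,y)^2$ follows from \eqref{k-convexity} by dividing by $t$ and letting $t\to 0$, not $t\to 1$ (the paper parametrizes the geodesic from $x$, divides by $1-t$, and lets $t\to 1$).
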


\begin{proof}
Thanks to the $K$-convexity with $K>0$ and the completeness of $(X,d)$,
there is a unique minimizer $y \in G$ of $f$ (see, e.g., \cite[Lemma~2.4.8]{ags}).
For any $x \in G$, by dividing \eqref{k-convexity} with $1-t$ and letting $t \to 1$, we have
\begin{equation}\label{eq:P:1}
\frac{K}{2}d(x,y)^2\leq f(x)-f(y).
\end{equation}

Let us consider Case (I), the proof of Case (II) will be similar.
By \eqref{eq:T:1.3}, we have
\begin{equation*}
d(y,x_{kn+n})^2\leq d(y,x_{kn})^2-2\lambda_k[f(x_{kn})-f(y)]+2\lambda_k^2 L^2n(n+1).
\end{equation*}
Using \eqref{eq:P:1}, we get
\begin{equation}\label{eq:C:2}
d(y,x_{kn+n})^2\leq (1-\lambda_kK)d(y,x_{kn})^2+2\lambda_k^2 L^2n(n+1).
\end{equation}
Then by induction it is easy to see that $d(x_{kn},y)^2\leq a_k$.
The explicit formula for $a_{k+1}$ is proved also by induction.

Now we prove $\liminf_{k\to\infty}a_k=0$ by contradiction.
Assume that there are $N \ge 0$ and $c>0$ such that, for every $k>N$,
we have $a_k>c$ and $2L^2n(n+1)\lambda_k <cK/2$.
Then
\[ a_{k+1}=a_k+\lambda_k(2L^2n(n+1)\lambda_k-a_kK)\leq a_k-\frac{\lambda_kcK}{2}, \]
which is a contradiction, since $\sum_{k=0}^{\infty}\lambda_k=+\infty$.
We finally show $\lim_{k \to \infty}a_k=0$.
If $a_k>2L^2n(n+1)\lambda_k/K$, then clearly $a_{k+1}<a_k$.
If $a_k\leq 2L^2n(n+1)\lambda_k/K$, then
\[ a_{k+1} \le (1-\lambda_kK)(2L^2n(n+1)\lambda_k/K)+2\lambda_k^2 L^2n(n+1)=2L^2n(n+1)\lambda_k/K. \]
Thus we have
\[ a_{k+1}\leq \max\{a_k,2L^2n(n+1)\lambda_k/K\}, \]
from which we get, for any $l \ge k$,
\[ a_{l+1}\leq \max\left\{a_k,(2L^2n(n+1)/K) \cdot \max\{\lambda_k,\lambda_{k+1},\ldots,\lambda_l\}\right\}. \]
Take $\limsup_{l\to\infty}$ and then $\liminf_{k\to\infty}$ of the above to see that $a_k\to 0$. The convergence of the rest of the sequence $d(x_m,y)^2$ to $0$ follows from setting up a similar inequality of the form \eqref{eq:T:1.2}.
\end{proof}

For the explicit convergence rate analysis, let us quote a lemma from \cite{Ne1}:
\begin{lemma}\label{L:convrate}
Let $a_k\geq 0$ be a sequence such that
\[ a_{k+1}\leq \left(1-\frac{\alpha}{k+1}\right)a_k+\frac{\beta}{(k+1)^2}, \]
where $\alpha,\beta>0$. Then
\[ a_{k}\leq
\begin{cases}
\frac{1}{(k+2)^{\alpha}}\left(a_0+\frac{2^\alpha\beta(2-\alpha)}{1-\alpha}\right) & \mbox{if }0<\alpha<1;\\
\frac{\beta(1+\log(k+1))}{k+1} & \mbox{if }\alpha=1;\\
\frac{1}{(\alpha-1)(k+2)}\left(\beta+\frac{(\alpha-1)a_0-\beta}{(k+2)^{\alpha-1}}\right) & \mbox{if }\alpha>1.
\end{cases} \]
\end{lemma}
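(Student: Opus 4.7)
The plan is to prove each of the three cases by induction on $k$, with inductive ansatz $a_k \leq B_k$ where $B_k$ is the upper bound stated in the lemma. Assuming $a_k \leq B_k$ and substituting into the recursion, the inductive step reduces to the scalar inequality $(1 - \alpha/(k+1))B_k + \beta/(k+1)^2 \leq B_{k+1}$, which is the only point to verify in each case.

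For case $0 < \alpha < 1$, taking $B_k = C/(k+2)^\alpha$ with $C$ as in the statement, dividing through by $C/(k+3)^\alpha$ reduces the step to a Bernoulli-type estimate $(1 - \alpha/(k+1))((k+3)/(k+2))^\alpha \leq 1 - \eta_k$ with $\eta_k$ of order $(k+2)^{-2}$; the exact constant $2^\alpha(2-\alpha)/(1-\alpha)$ in $C$ is then calibrated precisely so that $\beta(k+3)^\alpha / ((k+1)^2 C) \leq \eta_k$ for all $k \geq 0$, the tight case being $k = 0$. For $\alpha = 1$, taking $B_k = \beta(1+\log(k+1))/(k+1)$, the step reduces to $\log(1 + 1/(k+1)) \geq 1/(k+2)$, which follows from $\log(1+t) \geq t/(1+t)$. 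For $\alpha > 1$, the ansatz decomposes as a $1/(k+2)$ term plus a $(k+2)^{-\alpha}$ correction, and each piece is controlled by a Bernoulli-type estimate analogous to the case $\alpha < 1$, with the coefficient $1/(\alpha-1)$ arising by balancing the source term $\beta/(k+1)^2$ against the $1/(k+2)$ decay.

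An alternative is to unroll the recursion into the closed form $a_k \leq P_k a_0 + \beta \sum_{m=1}^{k} m^{-2} \, P_k/P_m$, where $P_k := \prod_{j=1}^{k}(1 - \alpha/j) = \Gamma(k+1-\alpha)/(\Gamma(1-\alpha)\Gamma(k+1))$ when $\alpha \notin \mathbb{N}$, estimate $P_k$ using Gautschi's inequality to get $P_k$ on the order of $k^{-\alpha}$, and then reduce the weighted sum to $k^{-\alpha}\sum_{m=1}^k m^{\alpha-2}$; the three cases correspond exactly to whether this last series converges, diverges logarithmically in $k$, or grows polynomially like $k^{\alpha-1}$.

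The main obstacle is tracking the sharp numerical constants. The asymptotic orders $k^{-\alpha}$, $(\log k)/k$, and $1/k$ drop out readily from either approach, but the exact forms stated in the lemma require careful bookkeeping: the base case $k = 0$ must be compatible with $C$ in case $\alpha < 1$, there is essentially no multiplicative slack in case $\alpha = 1$ to absorb lower-order errors, and the delicate combination $\beta + ((\alpha-1)a_0 - \beta)(k+2)^{1-\alpha}$ appearing in case $\alpha > 1$ must be respected exactly rather than only up to a universal constant, which is why the inductive ansatz rather than a purely asymptotic unrolling argument is the cleanest route.
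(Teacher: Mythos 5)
The paper offers no proof of Lemma~\ref{L:convrate} at all: it is quoted verbatim from \cite{Ne1}, whose argument is precisely the unrolled-product computation that you relegate to an ``alternative''. The real issue is that your primary route --- induction with the stated bound as ansatz --- provably cannot close in the case $0<\alpha<1$, and your calibration claim is exactly backwards. Dividing the one-step requirement by $C/(k+3)^{\alpha}$ as you propose, the available gain is $\eta_k=1-(1-\frac{\alpha}{k+1})(\frac{k+3}{k+2})^{\alpha}\sim\frac{\alpha(3+\alpha)}{2}(k+2)^{-2}$, as you say; but the source term to be absorbed is $\beta(k+3)^{\alpha}/(C(k+1)^2)\sim(\beta/C)k^{\alpha-2}$, which exceeds $\eta_k$ by a factor of order $k^{\alpha}$. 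Hence the required inequality fails for \emph{every} fixed $C$ once $k\gtrsim(C\alpha(3+\alpha)/(2\beta))^{1/\alpha}$: the tight regime is $k\to\infty$, not $k=0$ (numerically, $\alpha=1/2$, $\beta=1$, $C=3\sqrt{2}$ already fails at $k=100$). The set $\{a\le C(k+2)^{-\alpha}\}$ is simply not invariant under the one-step map for any $C$; the rate $k^{-\alpha}$ is a global phenomenon carried by the product $P_k=\prod_{j=1}^{k}(1-\alpha/j)$, via $1-\alpha/j\le(1-1/j)^{\alpha}$ and hence $P_k/P_m\le(m/k)^{\alpha}$, combined with $\sum_{m\ge1}m^{\alpha-2}\le\frac{2-\alpha}{1-\alpha}$ --- i.e., your unrolling route is not an alternative but the only viable one. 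A similar local breakdown occurs in the third case when $\alpha\ge3$: the $1/(k+2)$ term incurs a one-step deficit of order $k^{-3}$ that the $(k+2)^{-\alpha}$ correction, contributing only order $k^{-2-\alpha}$, cannot repair. Only your $\alpha=1$ step analysis is sound, and only for $k\ge1$ (the step inequality amounts to $(1+\log(k+1))(k+2)\ge 2k+3$, which your $\log(1+t)\ge t/(1+t)$ delivers there but which fails at $k=0$).

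Separately, had you actually executed the base case on which your plan rests, you would have found it false: as transcribed, the lemma does not hold at small $k$, so no induction anchored at $k=0$ with these exact constants can exist. For $0<\alpha<1$ the $k=0$ bound reads $a_0\le 2^{-\alpha}a_0+\beta\frac{2-\alpha}{1-\alpha}$, violated whenever $a_0(1-2^{-\alpha})>\beta\frac{2-\alpha}{1-\alpha}$; for $\alpha=1$ it demands $a_0\le\beta$; and for $\alpha=2$, $a_0=0$, the equality sequence gives $a_2=\beta/4$, exceeding the stated $3\beta/16$. The bounds do hold from a small index onward (via the unrolling one gets the first case for $k\ge2$, using $P_k\le e^{-\alpha H_k}\le(e^{\gamma}k)^{-\alpha}\le(k+2)^{-\alpha}$ for $k\ge3$ and checking $k=2$ by hand), which is all the application in Proposition~\ref{P:1} --- sublinearity --- requires. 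So your closing judgment is inverted: respecting the exact constants by stepwise induction is not merely delicate here, it is impossible both at the base and, for $\alpha\ne1$ in the ranges indicated, at the inductive step; the asymptotic bookkeeping through the product formula, with the sharp estimate $P_k/P_m\le(m/k)^{\alpha}$ replacing Gautschi-type asymptotics, is the clean route and is the one taken in \cite{Ne1}.
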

From this we obtain that the convergence is sublinear in Proposition~\ref{P:1}.

\section{Law of large numbers and Jensen's inequality}\label{S:LLN}

In this section, we give a stochastic discrete-time gradient flow for arbitrary convex (infinite) combinations of convex functions.
We will restrict ourselves to $K$-convex functions with $K>0$,
however, our proofs can be adapted to the case $K=0$ in the same manner as we have seen in Theorem~\ref{T:1},
which is a generalized form of Proposition~\ref{P:1} in this sense. We will adopt, and generalize the notations of \cite{St02b,St03} given for measures supported only over the squared distance functions $f_a(x):=d(x,a)^2$.

Let $G \subset X$ be a closed, geodesically convex set. We assume that $(G,d)$ is separable. Consider the set of all lower semi-continuous, convex functions $f:G\to(-\infty,\infty]$ not identically $+\infty$, denoted by $F(G)$. For $K>0$, we denote by $F_K(G)$ the subset of all lower semi-continuous, $K$-convex functions $f:G\to(-\infty,\infty]$ not identically $+\infty$. In order to consider measures over $F_K(G)$, we must equip $F(G)$ with a $\sigma$-algebra. There are different ways to do this, however there is a natural topology on $F(G)$ that is obtained by associating every function $f\in F(G)$ with its epigraph $\mathrm{epi}(f):=\{(x,\alpha)\in G\times(-\infty,+\infty]:\alpha\geq f(x)\}$. It is known that $f$ is convex lower semi-continuous if and only if $\mathrm{epi}(f)$ is a closed convex set of $(G\times(-\infty,+\infty])$ which itself is equipped with the product topology. The construction of the topology we adopt is a standard one in stochastic variational analysis, we refer to the book \cite{RockWets98}, where instead of an arbitrary Polish space $X$, only finite dimensional Euclidean spaces are considered, however the theory carries over without modifications to the general case, as can be seen in \cite{KorfWets01} for example.

The set of closed convex sets of $(G\times(-\infty,+\infty])$ is denoted by $\mathrm{clc}(G\times(-\infty,+\infty])$. The \emph{Effr\"os-field} on $\mathrm{clc}(G\times(-\infty,+\infty])$ is the $\sigma$-field $\mathcal{E}(G\times(-\infty,+\infty])$ generated by all sets of the form
\begin{equation*}
\mathcal{E}_{O}=\{C\in\mathrm{clc}(G\times(-\infty,+\infty]):C\cap O\neq\emptyset\},\quad O\subseteq G\times(-\infty,+\infty],\quad\text{open}.
\end{equation*}
The topology on $F(G)$ is then generated by the topology on $\mathrm{clc}(G\times(-\infty,+\infty])$ given by the $\sigma$-field $\mathcal{E}(G\times(-\infty,+\infty])$ which is itself generated by the \emph{Fell} or \emph{Choquet-Wijsman} hyperspace topologies, see \cite{KorfWets01} and the references therein. The resulting $\sigma$-field on $F(G)$ is denoted by $\mathcal{E}$. It is known that $\mathcal{E}$ is generated by sets of the form
\begin{equation*}
\mathcal{E}_{(O,\alpha)}=\left\{f\in F(G):\inf_Of<\alpha\right\},\quad O\subseteq G,\quad\text{open}, \quad \alpha\in\mathbb{R},
\end{equation*}
see \cite{KorfWets01}, it corresponds to a topology of one-sided uniform convergence. Now we can consider measures on $(F_K(G),\mathcal{E})$, i.e. random lower semi-continuous $K$-convex functions on $G$. Let $(\Pi,\mathcal{A},\mu)$ be a complete probability space. Then a map $L:\Pi\to F_K(G)$ is a \emph{random lower semi-continuous ($K$-)convex function} if the bivariate map $(\Pi,G)\to L(a,x)$ is $\mathcal{A}\otimes\mathcal{B}(X)$-measurable where $\mathcal{B}(X)$ denotes the Borel $\sigma$-algebra of $(G,d)$. Equivalently $L:\Pi\to F_K(G)$ is a \emph{random lower semi-continuous ($K$-)convex function} if the associated epigraphical mapping $S_L(a):=\mathrm{epi}\,L(a,\cdot)=\{(x,\alpha)\in G\times\mathbb{R}:\alpha\geq L(a,x)\}$ is $\mathcal{E}(G\times(-\infty,+\infty])$ measurable as a closed convex set valued mapping, see Proposition~14.34 in \cite{RockWets98}.

A very useful consequence of the measurability of $L:\Pi\to F_K(G)$ is the following:
\begin{lemma}\label{L:3}
The resolvent map $\Pi\mapsto J_{\lambda}^{L(a)}(x)$ defined by \eqref{eq:resolventabove} for fixed $x\in G$, as a map $\Pi\to X$, is closed-valued and $\mathcal{A}$-measurable as a set-valued map, moreover $\Pi\mapsto L(a,J_{\lambda}^{L(a)}(x))$, as a map $\Pi\to \mathbb{R}$ is also $\mathcal{A}$-measurable.
\end{lemma}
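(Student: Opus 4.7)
The plan is to recognize $L$ as a normal integrand in the sense of variational analysis and to apply the standard measurable argmin theorem of Rockafellar--Wets, in the Polish space extension of Korf--Wets already cited just before the lemma.

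First I would introduce the perturbed integrand $\Phi_x:\Pi\times G\to(-\infty,\infty]$ defined by
\[
\Phi_x(a,y):=L(a,y)+\frac{1}{2\lambda}d(x,y)^2.
\]
Since $y\mapsto\frac{1}{2\lambda}d(x,y)^2$ is continuous in $y$ and does not depend on $a$, joint $\mathcal{A}\otimes\mathcal{B}(X)$-measurability and lower semicontinuity in $y$ transfer additively from $L$ to $\Phi_x$; thus $\Phi_x$ is again a random lower semi-continuous function on $G$. For every fixed $a\in\Pi$, $\Phi_x(a,\cdot)$ is the sum of the $K$-convex function $L(a,\cdot)$ with $K>0$ and a convex function, hence is strictly convex, so the minimizer defining $J_{\lambda}^{L(a)}(x)$ exists and is unique by the uniqueness lemma stated right after \eqref{eq:resolventabove}.

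Second, I would invoke the measurable argmin theorem for normal integrands: the multifunction
\[
a\mapsto\argmin_{y\in G}\Phi_x(a,y)
\]
is closed-valued (directly from lower semicontinuity of $\Phi_x(a,\cdot)$) and $\mathcal{A}$-measurable. In the Euclidean state-space setting this is \cite[Theorem~14.37]{RockWets98}, and it is carried over verbatim to general Polish $G$ and to the Effr\"os-field structure introduced just before the lemma in \cite{KorfWets01}. This is exactly the first assertion of the lemma. Uniqueness of the minimizer additionally shows that this closed-valued multifunction is in fact single-valued, so $a\mapsto J_{\lambda}^{L(a)}(x)$ is an $\mathcal{A}$-measurable map $\Pi\to G$ in the ordinary single-valued sense as well.

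For the second assertion, I would observe that the graph map $a\mapsto(a,J_{\lambda}^{L(a)}(x))$ is $\mathcal{A}$-measurable into $(\Pi\times G,\mathcal{A}\otimes\mathcal{B}(X))$ by the previous step, while $L:\Pi\times G\to(-\infty,\infty]$ is $\mathcal{A}\otimes\mathcal{B}(X)$-measurable by hypothesis; composing gives the $\mathcal{A}$-measurability of $a\mapsto L(a,J_{\lambda}^{L(a)}(x))$. The one place I expect a genuine (but already-handled) subtlety is the citation-level verification that the Rockafellar--Wets measurable-selection framework applies to our Polish state space $(G,d)$ equipped with the Effr\"os $\sigma$-field $\mathcal{E}$ rather than to a Euclidean state space; this is precisely the purpose of \cite{KorfWets01} and requires no new computation beyond the standard normal-integrand formalism.
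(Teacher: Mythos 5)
Your proposal is correct and matches the paper's approach: the paper's entire proof is the citation ``See Theorem~14.37 in \cite{RockWets98}'', i.e.\ exactly the measurable argmin theorem for normal integrands that you invoke, with the Polish-space/Effr\"os-field extension via \cite{KorfWets01} already set up in the surrounding text. Your write-up merely makes explicit the details the paper leaves implicit (the perturbed integrand $\Phi_x$, single-valuedness from strict convexity, and the composition argument for $a\mapsto L(a,J_{\lambda}^{L(a)}(x))$), all of which are sound.
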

\begin{proof}
See Theorem 14.37 in \cite{RockWets98}.
\end{proof}
Also by the measurable projection theorem we have that for fixed $x\in G$ the map $L(\cdot,x):\Pi\to (-\infty,+\infty]$ is $\mathcal{A}$-measurable, see Proposition~14.28 in \cite{RockWets98}. What follows is that the integral $f(x)=\int_{\Pi}L(a,x)d\mu(a)$ pointwisely defines an extended real-valued function $f:G\to[-\infty,+\infty]$.

\begin{lemma}\label{L:4}
The function $f:G\to[-\infty,+\infty]$ defined as $f(x):=\int_{\Pi}L(a,x)d\mu(a)$ is lower semi-continuous $K$-convex and $f(x)>-\infty$ for all $x\in G$ if there exists an integrable function $\alpha_0:\Pi\to(-\infty,+\infty)$ such that $L(a,x)\geq \alpha_0(a)$ holds almost surely.
\end{lemma}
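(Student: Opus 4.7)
The plan is to verify the three conclusions of the lemma separately: positivity of the lower bound $f(x)>-\infty$, the $K$-convexity inequality, and lower semi-continuity. The integrability of $\alpha_0$ provides the uniform lower envelope that lets us apply standard integration tools (Fatou's lemma, linearity for the shifted non-negative integrands) without ever being troubled by $\infty-\infty$ indeterminacies; this is the unifying technical device in the argument. All pointwise measurability in $a$ of the relevant integrands is already in hand from the measurable projection theorem cited in the paragraph preceding the lemma, so the integrals $\int_\Pi L(a,x)\,d\mu(a)$ are well defined as elements of $(-\infty,+\infty]$.

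For the lower bound and $K$-convexity, I would proceed as follows. The inequality $L(a,x)\ge\alpha_0(a)$ a.s.\ and integrability of $\alpha_0$ immediately give $f(x)\ge \int_\Pi \alpha_0(a)\,d\mu(a)>-\infty$. For $K$-convexity, fix $x,y\in G$, $t\in[0,1]$, and any minimal geodesic $x\#_ty$. Almost surely the function $L(a,\cdot)$ is $K$-convex, so
\[
L(a,x\#_ty)-\alpha_0(a)\le (1-t)\bigl(L(a,x)-\alpha_0(a)\bigr)+t\bigl(L(a,y)-\alpha_0(a)\bigr)-\frac{K}{2}t(1-t)d(x,y)^2.
\]
Both sides are now non-negative (up to the deterministic last term), so integrating against $\mu$ using linearity of the integral for non-negative measurable functions, and then adding $\int_\Pi\alpha_0\,d\mu$ back to both sides, yields $f(x\#_ty)\le(1-t)f(x)+tf(y)-\frac{K}{2}t(1-t)d(x,y)^2$.

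For lower semi-continuity, since $(G,d)$ is metric and hence first countable, it suffices to verify the sequential version. Let $x_n\to x$ in $G$. By lower semi-continuity of $L(a,\cdot)$ we have, almost surely,
\[
L(a,x)-\alpha_0(a)\le \liminf_{n\to\infty}\bigl(L(a,x_n)-\alpha_0(a)\bigr),
\]
and the integrands on the right are non-negative and measurable in $a$. Applying Fatou's lemma and then adding $\int_\Pi\alpha_0\,d\mu$ back gives $f(x)\le\liminf_{n\to\infty}f(x_n)$, which is exactly lower semi-continuity. The main obstacle, if any, is purely bookkeeping: ensuring that each integrand is a legitimate measurable function of $a$ and that no $\infty-\infty$ appears in the manipulations. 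Both concerns are resolved uniformly by subtracting the integrable lower envelope $\alpha_0(a)$ from $L(a,\cdot)$ before integrating, which is why the hypothesis on $\alpha_0$ is precisely what the proof needs.
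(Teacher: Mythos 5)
Your proof is correct and takes essentially the same route as the paper's: Fatou's lemma yields lower semi-continuity, $K$-convexity follows by integrating the pointwise inequality \eqref{k-convexity}, and the bound $f>-\infty$ is immediate from the hypothesis on $\alpha_0$. Your systematic subtraction of the integrable minorant $\alpha_0$ before applying Fatou and linearity is precisely the justification the paper leaves implicit (Fatou for possibly negative integrands requires such an integrable lower envelope), so it is a welcome tightening rather than a genuinely different argument.
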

\begin{proof}
The lower boundedness of $f$ is clear under the last part of the assumption.

To prove the first part, let $x_k\in G$ be a sequence such that $x_k\to x$. We have $f(x)=\int_{\Pi}L(a,x)d\mu(a)$ and $f(x_k)=\int_{\Pi}L(a,x_k)d\mu(a)$. For fixed $a\in\Pi$ by lower semi-continuity we have $L'(a,x):=\lim_{x_k\to x}L(a,x_k)\geq L(a,x)$. Hence, by monotonicity of the Lebesgue-integral we get $\int_{\Pi}L'(a,x)d\mu(a)\geq \int_{\Pi}L(a,x)d\mu(a)$. Then by Fatou's lemma we get $\int_{\Pi}L'(a,x)d\mu(a)\leq \liminf_{k\to\infty}\int_{\Pi}L(a,x_k)d\mu(a)$, hence $f(x)\leq \liminf_{k\to\infty}f(x_k)$ proving the lower semi-continuity. Now the $K$-convexity of $f$ is obtained by integrating the inequality \eqref{k-convexity} given for $x\mapsto L(a,x)$ for fixed $a\in\Pi$.
\end{proof}

With the above setup at hand, instead of always emphasizing the complete probability space $(\Pi,\mathcal{A},\mu)$, we assume directly that $\Pi:=F_K(G)$, $\mathcal{A}:=\mathcal{E}$ and $\mu$ is a complete probability measure on $(\Pi,\mathcal{A})$. By the definition of a random lower semi-continuous $K$-convex function it follows that the map $(\Pi,G)\to L(a,x)$ is $\mathcal{A}\otimes\mathcal{B}(X)$-measurable, see Exercise~14.9 in \cite{RockWets98}, hence the above machinery applies. For simplicity we denote by $\mathfrak{P}(F_K(G))$ the set of all complete probability measures on $F_K(G)$ with $\sigma$-field $\mathcal{A}=\mathcal{E}$, such that $g(x):=\int_{F_K(G)}f(x)d\mu(f)$ is lower semi-continuous $(-\infty,+\infty]$-valued $K$-convex and there exists $x\in G$ so that $g(x)<+\infty$.

\begin{definition}[Variance]
We define the \emph{variance} of $\mu\in \mathfrak{P}(F_K(G))$ by
\[ \var(\mu):=\inf_{x\in G}\int_{F_K(G)}f(x)d\mu(f). \]
\end{definition}
This contains as a special case the original definition of the variance given by $\var(\nu):=\inf_{x\in G}\int_{G}d(x,a)^2d\nu(a)$ in \cite{St02b,St03} for a probability measure $\nu$ supported over $G$.

A fixed $\mu\in \mathfrak{P}(F_K(G))$ can be viewed as the distribution of an $F_K(G)$-valued random variable.
In this sense, integration with respect to $\mu$ can be viewed as taking expectations:
\[ \mathbb{E}\varphi:=\int_{F_K(G)}\varphi(f)d\mu(f), \]
where $\varphi:F_K(G)\to[-\infty,+\infty]$ is assumed to be measurable.

\begin{definition}[Expectation]
Let $\mu\in \mathfrak{P}(F_K(G))$.
We define the \emph{expectation} of $\mu$ as
\[ \mathbb{E}\mu:=\argmin_{x\in G}\int_{F_K(G)}f(x)d\mu(f), \]
which is indeed uniquely determined by the $K$-convexity of $g(x)=\int_{F_K(G)}f(x)d\mu(f)$.
\end{definition}

The above is motivated by the definition given in \cite{St02b,St03} of the expectation as
$\mathbb{E}\nu:=\argmin_{x\in G}\int_{G}d(x,a)^2d\nu(a)$ of a probability measure $\nu$ supported over $G$.

Note that $g(\mathbb{E}\mu)=\var(\mu)$.
Using our new notation, we have a generalization of the \emph{variance inequality} in \cite[Proposition~4.4]{St03}
as well (see also \cite{St05,Oh3} for the \emph{reverse} variance inequality
for squared distance functions under lower curvature bounds).
Let $L_x$ denote the evaluation operator at $x\in G$ defined as $L_xf:=f(x)$.
Clearly $L_x$ is a linear functional on the cone $F_K(G)$.

\begin{proposition}[Variance inequality]
Let $\mu\in \mathfrak{P}(F_K(G))$.
Then, for all $x\in G$, we have
\begin{equation}\label{eq:varineq}
d(x,\mathbb{E}\mu)^2\leq
 \frac{2}{K}\mathbb{E}\left(L_x-L_{\mathbb{E}\mu}\right)=\frac{2}{K}\int_{F_K(G)}[f(x)-f(\mathbb{E}\mu)]d\mu(f).
\end{equation}
\end{proposition}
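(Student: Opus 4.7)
The plan is to reduce the variance inequality to the standard quadratic lower bound for a $K$-convex function at its unique minimizer, applied to the expectation function $g(x) := \int_{F_K(G)} f(x)\,d\mu(f)$. By Lemma~\ref{L:4} together with the assumption $\mu\in\mathfrak{P}(F_K(G))$, the function $g$ is lower semi-continuous and $K$-convex on $G$ with $g(x)>-\infty$ everywhere, and by the very definition of $\mathbb{E}\mu$ it is the unique minimizer of $g$ on $G$ (existence and uniqueness of this minimizer under $K>0$ and completeness of $G$ being exactly the facts invoked in the proof of Proposition~\ref{P:1}).

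First, I would record the identity
\[ g(x)-g(\mathbb{E}\mu)=\int_{F_K(G)}[f(x)-f(\mathbb{E}\mu)]\,d\mu(f)=\mathbb{E}(L_x-L_{\mathbb{E}\mu}), \]
which is immediate from the definition of $g$ together with linearity of integration, and which reduces the target inequality \eqref{eq:varineq} to
\[ \frac{K}{2}d(x,\mathbb{E}\mu)^2\leq g(x)-g(\mathbb{E}\mu). \]
To establish this, I would fix any minimal geodesic $x\#_t\mathbb{E}\mu$ joining $x$ to $\mathbb{E}\mu$ and apply the $K$-convexity inequality \eqref{k-convexity} to $g$ along it. Using the minimality bound $g(x\#_t\mathbb{E}\mu)\geq g(\mathbb{E}\mu)$, subtracting $tg(\mathbb{E}\mu)$ from both sides, dividing by $(1-t)$ for $t\in[0,1)$, and letting $t\to 1$ produces exactly the desired bound. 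This is the same computation that yields \eqref{eq:P:1} in the proof of Proposition~\ref{P:1}, now carried out for $g$ in place of the $f$ there.

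No serious obstacle is expected: all the measurability and integrability bookkeeping has been absorbed into Lemmas~\ref{L:3} and \ref{L:4} and the definition of $\mathfrak{P}(F_K(G))$, so in particular $g$ is a genuine lower semi-continuous $K$-convex function on $G$ admitting a unique minimizer, and the geodesic $K$-convexity of $g$ descends by integration in $\mu$ from the pointwise $K$-convexity of the individual $f\in F_K(G)$ (as already observed at the end of the proof of Lemma~\ref{L:4}). The only conceptual point worth emphasizing is that the variance inequality is, under this viewpoint, literally the minimizer characterization of Proposition~\ref{P:1} transported from a single convex function to the $\mu$-averaged one, which is why the statement and its Euclidean/squared-distance predecessor in \cite{St02b,St03} fit seamlessly into the present framework.
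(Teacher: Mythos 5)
Your proposal is correct and follows exactly the paper's argument: the paper also defines $g(x)=\int_{F_K(G)}f(x)\,d\mu(f)$, notes that $g(\mathbb{E}\mu)=\inf_G g$, and invokes the quadratic bound \eqref{eq:P:1} for the $K$-convex function $g$ at its minimizer. Your write-up merely unpacks the derivation of \eqref{eq:P:1} (divide \eqref{k-convexity} by $1-t$ and let $t\to 1$) rather than citing it, so there is nothing substantive to change.
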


\begin{proof}
Put $g(x)=\int_{F_K(G)}f(x)d\mu(f)$ and note that $g(\mathbb{E}\mu)=\inf_G g$.
Then the claim follows from \eqref{eq:P:1}.
\end{proof}

\begin{remark}\label{R:1}
Lemma~\ref{L:3} ensures us that, in the case of upper curvature bound,
the nonnegative real-valued map $f\mapsto d(y,J_{\lambda}^f(x))^2$ is measurable for any $x,y\in G$, i.e.
\begin{equation}\label{eq:int}
\int_{F_K(G)}d(y,J_{\lambda}^f(x))^2d\mu(f)
\end{equation}
exists. 
In the case of lower curvature bound, the measurability of $f\mapsto d(y,J_{\lambda}^f(x))^2$ is nontrivial
and verified only in special cases.
If $\mu$ is finitely supported, then measurability is clear.
Also if $X$ is a Euclidean space and $\mu$ is supported over differentiable functions,
then the measurability follows from the continuity of the gradient vectors of convex functions,
see Theorem~25.7 in \cite{Rock97}.
\end{remark}

In the following, we prove a stochastic variant of Proposition~\ref{P:1},
which extends the law of large numbers proved in \cite[Theorem~4.7]{St03} to the case of Alexandrov spaces
with arbitrary upper or lower curvature bounds, and arbitrary Lipschitz functions in $F_K(G)$.

\begin{theorem}[Law of large numbers]\label{T:2}
Let $(X,d)$ and $G \subset X$ be as in Definition~$\ref{D:alg1}$.
Fix $\mu\in \mathfrak{P}(F_K(G))$ supported on $L$-Lipschitz functions and let
$\{f_k\}_{k\geq 0}$ denote a sequence of independent, identically distributed random variables
taking values in $F_K(G)$ with distribution $\mu$.
Take a positive sequence $\{\lambda_k\}_{k\geq 0}$ with $\lambda_kK<1$, $\lambda_k\to 0$
and $\sum_{k=0}^\infty\lambda_k=+\infty$.
Define the sequence $S_k \in G$ recursively as
\[ S_{k+1}:=J_{\lambda_k}^{f_k}(S_k), \quad k \ge 0, \]
with an arbitrary starting point $S_0\in G$,
assuming that $S_k \in G$ for all $k \ge 0$ and the integral in \eqref{eq:int} exists
in the lower curvature bound case.
Then $S_k\to\mathbb{E}\mu$ almost surely.
\end{theorem}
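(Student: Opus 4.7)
The plan is to translate the deterministic recursion underlying the proof of Proposition~\ref{P:1} into a stochastic quasi-supermartingale inequality and then appeal to the Robbins--Siegmund martingale convergence theorem. Set $y := \mathbb{E}\mu$, $g(x) := \int_{F_K(G)} f(x)\,d\mu(f)$, $\mathcal{F}_k := \sigma(f_0,\ldots,f_{k-1})$ and $a_k := d(y,S_k)^2$. Then $S_k$ is $\mathcal{F}_k$-measurable while $f_k$ has distribution $\mu$ and is independent of $\mathcal{F}_k$; the goal is to show $a_k \to 0$ almost surely.

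The first step is to derive a pathwise inequality by applying Lemma~\ref{L:1} at each realization $f_k$ with the two points $S_k$ and $y$. In the upper curvature bound case this yields $a_{k+1}\leq a_k - 2\lambda_k[f_k(S_{k+1})-f_k(y)]$; the $L$-Lipschitz continuity of $f_k$ together with the a priori bound $d(S_k,S_{k+1})\leq 2\lambda_k L$, obtained from the minimization defining \eqref{eq:resolventabove} exactly as in \eqref{eq:T:1.2}, lets one replace $f_k(S_{k+1})$ by $f_k(S_k)$ up to an additive $4\lambda_k^2 L^2$. The lower curvature bound case is analogous: Lemma~\ref{L:1}(II) directly provides the $f_k(S_k)-f_k(y)$ term with error $\tfrac{K}{2}(\lambda_k|\nabla\!_- f_k|(S_k))^2 \leq \tfrac{K}{2}\lambda_k^2 L^2$, where the gradient bound comes from Lipschitz continuity. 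In either case one arrives at
\[
a_{k+1} \leq a_k - 2\lambda_k[f_k(S_k) - f_k(y)] + C\lambda_k^2
\]
for an explicit constant $C$ depending on $L$, $K$, $\kappa$ and $\diam G$.

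Next, take $\mathcal{F}_k$-conditional expectations. Because $f_k$ is independent of $\mathcal{F}_k$ and $S_k$ is $\mathcal{F}_k$-measurable, $\mathbb{E}[f_k(S_k)-f_k(y)\mid \mathcal{F}_k] = g(S_k)-g(y)$; the measurability of the map $f\mapsto d(y,J_\lambda^f(S_k))^2$ required to legitimize this conditioning is furnished by Lemma~\ref{L:3} in the upper curvature bound case, and is precisely the content of the standing hypothesis on the integral \eqref{eq:int} in the lower curvature bound case. Combining with the variance inequality \eqref{eq:varineq}, which gives $g(S_k)-g(y) \geq \tfrac{K}{2}a_k$, produces the quasi-supermartingale recursion
\[
\mathbb{E}[a_{k+1}\mid \mathcal{F}_k] \leq (1-\lambda_k K)\,a_k + C\lambda_k^2.
\]

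Finally, invoking the Robbins--Siegmund theorem (using $\sum_k \lambda_k^2 < \infty$, the natural companion of $\sum_k \lambda_k = \infty$ and $\lambda_k\to 0$ as in Definition~\ref{D:alg1}), one concludes that $a_k$ converges almost surely and that $\sum_k \lambda_k K a_k < \infty$ almost surely. The divergence $\sum_k\lambda_k = +\infty$ then forces $\liminf_k a_k = 0$ almost surely, so the a.s.\ limit of $a_k$ must equal $0$; that is, $S_k \to \mathbb{E}\mu$ almost surely. The principal obstacle is the passage from the conditional-expectation bound to pathwise a.s.\ convergence, for which Robbins--Siegmund is the canonical tool; a secondary subtlety is the measurability bookkeeping in the lower curvature bound case, where the absence of a general analogue of Lemma~\ref{L:3} is exactly what compels the theorem to assume the integrability in \eqref{eq:int} as a hypothesis.
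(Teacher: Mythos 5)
Your pathwise inequality and the conditioning step reproduce the paper's argument faithfully: the paper derives exactly the same bound $d(\mathbb{E}\mu,S_{k+1})^2\leq d(\mathbb{E}\mu,S_k)^2-2\lambda_k[f_k(S_k)-f_k(\mathbb{E}\mu)]+4\lambda_k^2L^2$ from Lemma~\ref{L:1}, the resolvent minimization property and $L$-Lipschitz continuity, then conditions on the past and applies the variance inequality \eqref{eq:varineq} to reach $\mathbb{E}\left(d(\mathbb{E}\mu,S_{k+1})^2\,|\,\mathcal{F}_{k-1}\right)\leq(1-\lambda_kK)d(\mathbb{E}\mu,S_k)^2+4\lambda_k^2L^2$, and your measurability bookkeeping (Lemma~\ref{L:3} above, the hypothesis on \eqref{eq:int} below) is also the paper's. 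The genuine gap is in your final step: Theorem~\ref{T:2} does \emph{not} assume $\sum_{k}\lambda_k^2<+\infty$. The theorem states its own step-size hypotheses --- $\lambda_kK<1$, $\lambda_k\to 0$, $\sum_{k}\lambda_k=+\infty$ --- and borrows from Definition~\ref{D:alg1} only the data $(X,d)$ and $G$, not the sequence $\lambda_k$; your parenthetical ``as in Definition~\ref{D:alg1}'' is a misreading. For admissible choices such as $\lambda_k=\epsilon/\sqrt{k+1}$ one has $\sum_k\lambda_k^2=+\infty$, the summability hypothesis of Robbins--Siegmund fails, and your argument collapses at precisely the step you describe as canonical.

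The paper avoids this by taking unconditional expectations, obtaining $\mathbb{E}d(\mathbb{E}\mu,S_{k+1})^2\leq(1-\lambda_kK)\,\mathbb{E}d(\mathbb{E}\mu,S_k)^2+4\lambda_k^2L^2$, and then running the deterministic recursion analysis from the proof of Proposition~\ref{P:1} after \eqref{eq:C:2}: splitting on whether $a_k$ exceeds $4L^2\lambda_k/K$ shows $a_k\to 0$ using only $\lambda_k\to 0$ and $\sum_k\lambda_k=+\infty$, with rates from Lemma~\ref{L:convrate}. Note the trade-off: the paper's route works under the weaker hypotheses but, as written, establishes $\mathbb{E}d(\mathbb{E}\mu,S_k)^2\to 0$, i.e., $L^2$ convergence (a.s.\ convergence along a subsequence), whereas your Robbins--Siegmund route would deliver the almost sure statement directly --- but only under the square-summability you are not entitled to assume. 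To make your approach match the theorem's hypotheses you would need a stochastic-approximation convergence result valid without $\sum_k\lambda_k^2<+\infty$ (e.g., an almost-supermartingale argument exploiting the contraction factor $1-\lambda_kK$ with $K>0$, for which the relevant condition is $\lambda_k^2/\lambda_k=\lambda_k\to 0$ rather than summability of $\lambda_k^2$), not the theorem you cite.
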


\begin{proof}
We prove only the upper curvature bound case, the lower curvature bound case is similar.
By \eqref{eq:L:1.1} in Lemma~\ref{L:1}, we have
\[ d(y,J_{\lambda_k}^{f_k}(x))^2\leq d(y,x)^2-2\lambda_k[f_k(J_{\lambda_k}^{f_k}(x))-f_k(y)] \]
for all $x,y \in G$.
Therefore we have
\begin{equation}\label{eq:T:2.1}
d(\mathbb{E}\mu,S_{k+1})^2\leq
 d(\mathbb{E}\mu,S_k)^2-2\lambda_k[f_k(S_k)-f_k(\mathbb{E}\mu)]+2\lambda_k[f_k(S_k)-f_k(S_{k+1})].
\end{equation}
By \eqref{eq:resolventabove}, we have
\[ f_k(S_{k+1})+\frac{1}{2\lambda_k}d(S_{k+1},S_{k})^2\leq f_k(S_{k}), \]
which yields by using the $L$-Lipschitz continuity that
\[ d(S_{k+1},S_{k})\leq 2\lambda_k\frac{f_k(S_{k})-f_k(S_{k+1})}{d(S_{k+1},S_{k})}\leq 2\lambda_k L. \]
Thus we obtain
\[ f_k(S_{k})-f_k(S_{k+1})\leq Ld(S_{k+1},S_{k})\leq 2\lambda_k L^2. \]
This combined with \eqref{eq:T:2.1} yields
\[ d(\mathbb{E}\mu,S_{k+1})^2\leq
 d(\mathbb{E}\mu,S_k)^2-2\lambda_k[f_k(S_k)-f_k(\mathbb{E}\mu)]+4\lambda_k^2L^2. \]
Taking expectations in $f_k$ conditioned on $\mathcal{F}_{k-1}:=\{f_1,\ldots,f_{k-1}\}$ and using the variance inequality \eqref{eq:varineq}, we get
\[ \begin{split}
\mathbb{E}\left(d(\mathbb{E}\mu,S_{k+1})^2|\mathcal{F}_{k-1}\right)
 &\leq d(\mathbb{E}\mu,S_k)^2-2\lambda_k\mathbb{E}[f_k(S_k)-f_k(\mathbb{E}\mu)]+4\lambda_k^2L^2\\
 &\leq d(\mathbb{E}\mu,S_k)^2-\lambda_k Kd(\mathbb{E}\mu,S_k)^2+4\lambda_k^2L^2,
\end{split} \]
and hence
\[ \mathbb{E}\left(d(\mathbb{E}\mu,S_{k+1})^2|\mathcal{F}_{k-1}\right)\leq
 (1-\lambda_kK)d(\mathbb{E}\mu,S_k)^2+4\lambda_k^2L^2. \]
Taking expectations again yields
\[ \mathbb{E}d(\mathbb{E}\mu,S_{k+1})^2\leq
 (1-\lambda_kK)\mathbb{E}d(\mathbb{E}\mu,S_k)^2+4\lambda_k^2L^2. \]
From here proving the convergence $\mathbb{E}d(\mathbb{E}\mu,S_{k+1})^2\to 0$ can be done in the same way
as in the proof of Proposition~\ref{P:1} after \eqref{eq:C:2}.
To get a convergence rate estimate, one can refer to Lemma~\ref{L:convrate}.
\end{proof}

\begin{remark}\label{R:2}
Suppose that $(X,d)$ has curvature bounded above by $\kappa>0$.
Fix arbitrary $o\in X$ and let $G:=\bar{B}_o(r)$ with $2r=(\pi/2-\epsilon)/\sqrt{\kappa}$
for $\epsilon\in(0,\pi/2)$.
Then, by Proposition~\ref{P:Convex}, the function $f_a(x):=d(a,x)^2$ with $a\in G$ is $K$-convex and Lipschitz continuous
on $G$ with $K=(\pi-2\epsilon)\tan\epsilon>0$.
Take $\mu\in \mathfrak{P}(F_K(G))$ such that $\supp\mu\subset\{f_a:a\in G\}$.
Then Theorem~\ref{T:2} generalizes Sturm's law of large numbers in \cite{St02b,St03}.
In particular, if $\lambda_k:=\frac{1}{2k}$, then we have
\begin{align*}
S_{k+1}&=J_{\lambda_k}^{f_k}(S_k)
 =\argmin_{z\in G} \left\{ d(a_k,z)^2+\frac{d(z,S_k)^2}{2\lambda_k} \right\}\\
&=\argmin_{z\in G} \left\{ \frac{2\lambda_k}{1+2\lambda_k}d(a_k,z)^2+\frac{1}{1+2\lambda_k}d(z,S_k)^2 \right\}\\
&=S_k\#_{\frac{1}{k+1}}a_k,
\end{align*}
where $a_k$ is a $G$-valued random variable with distribution provided by the push-forward measure of $\mu$
under the bijective map $f_a \mapsto a$.
In this case, one can reproduce the same sublinear order of convergence $O(1/k)$ as in \cite{St03}.
More generally, one can consider $f_a(x):=d(x,a)^p$ for any $p\in [2,\infty)$,
still $f_a(x)$ being $K$-convex and Lipschitz continuous on the same $G$,
hence Theorem~\ref{T:2} can be applied.

Under these assumptions if $\mu$ is also finitely supported, then our Proposition~\ref{P:1} extends the ``no dice'' approximation given only for the barycenter on NPC spaces in \cite{LP2,Hol}.
\end{remark}

It seems reasonable to expect that, in the upper curvature bound case in Remark~\ref{R:2},
one can take any $2r<\pi/\sqrt{\kappa}$ even though the functions $f_a$ are then not convex on whole $G$.
This is motivated by the results in \cite{afs} on the existence and uniqueness of the center of mass in Riemannian manifolds.

\begin{remark}\label{R:3}
Theorem~\ref{T:2} generalizes the law of large numbers from Euclidean spaces to Alexandrov spaces,
moreover, to the case of measures supported over the cone of $K$-convex Lipschitz functions.
We recover the original law of large numbers in Hilbert spaces by choosing $X$ to be a Hilbert space in Remark~\ref{R:2}.
Also the setting in Remark~\ref{R:2} is of interest if we choose $X$ to be a sphere,
or any compact Lie group with a bi-invariant Riemannian metric, for instance the matrix Lie group of rotations $\mathrm{SO}(\mathcal{H})$ studied in \cite{Moakher} or unitary tranformations $\mathrm{U}(\mathcal{H})$ over a finite dimensional Hilbert space $\mathcal{H}$.
\end{remark}

Using our law of large numbers, we have an alternative proof for Jensen's inequality of Kuwae~\cite{Kuw},
along the line of the second proof of \cite[Theorem~6.2]{St03} in the NPC space case.

\begin{proposition}[Jensen's inequality]\label{P:jensen}
Let $X$ be a complete Alexandrov space with curvature bound above by $\kappa>0$,
and $G\subset X$ be a closed, geodesically convex set with $\diam G<\pi/(2\sqrt{\kappa})$.
Take a probability measure $\mu$ on $G$ and a convex, lower semi-continuous function $f:G \to \mathbb{\R}$.
Then we have
\[ f(\mathbb{E}\mu)\leq \mathbb{E}f, \]
where $\mathbb{E}\mu:=\argmin_{y\in G}\int_{G}d(a,y)^2 d\mu(a)$ and $\mathbb{E}f:=\int_{G}f(a)d\mu(a)$.
\end{proposition}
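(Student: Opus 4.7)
The plan is to derive Jensen's inequality as a consequence of Theorem~\ref{T:2} applied to squared distance functions, combined with the classical Kolmogorov strong law of large numbers on the real line and the convexity and lower semi-continuity of $f$, following the spirit of the second proof of \cite[Theorem~6.2]{St03}. Let $\{a_k\}_{k\geq 0}$ be i.i.d.\ $G$-valued random variables with distribution $\mu$, and set $f_k(x):=d(a_k,x)^2$. Choose $\epsilon\in(0,\pi/2)$ with $\diam G<(\pi/2-\epsilon)/\sqrt{\kappa}$, so that by Proposition~\ref{P:Convex} every $f_a$ with $a\in G$ lies in $F_K(G)$ and is Lipschitz, with $K=(\pi-2\epsilon)\tan\epsilon>0$. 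Taking $\lambda_k:=1/(2k)$ for all $k$ large enough that $\lambda_k K<1$, the resolvent iteration $S_{k+1}:=J^{f_k}_{\lambda_k}(S_k)$ simplifies, via the explicit computation already carried out in Remark~\ref{R:2}, to the geodesic averaging rule
\[ S_{k+1}=S_k\#_{1/(k+1)}a_k, \]
which stays inside $G$ by geodesic convexity. Theorem~\ref{T:2} then yields $S_k\to\mathbb{E}\mu$ almost surely.

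Next, the convexity of $f$ along the unique minimal geodesic from $S_k$ to $a_k$ gives
\[ f(S_{k+1})\leq\frac{k}{k+1}f(S_k)+\frac{1}{k+1}f(a_k), \]
and an easy induction (with $S_1=a_0$, so $f(S_1)=f(a_0)$) produces the Ces\`aro-type bound
\[ f(S_k)\leq\frac{1}{k}\sum_{i=0}^{k-1}f(a_i) \]
for every $k\geq 1$. We may assume $\mathbb{E}f<+\infty$, for otherwise the inequality is vacuous; then the real-valued i.i.d.\ variables $f(a_k)$ have finite positive part, and Kolmogorov's strong law of large numbers provides $\frac{1}{k}\sum_{i=0}^{k-1}f(a_i)\to\mathbb{E}f$ almost surely.

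Combining these two ingredients with lower semi-continuity of $f$ and $S_k\to\mathbb{E}\mu$ gives, almost surely,
\[ f(\mathbb{E}\mu)\leq\liminf_{k\to\infty}f(S_k)\leq\lim_{k\to\infty}\frac{1}{k}\sum_{i=0}^{k-1}f(a_i)=\mathbb{E}f, \]
and since both extremes are deterministic constants the desired inequality follows. The main obstacle, and the conceptual heart of the proof, is bridging between the geometric conclusion produced by Theorem~\ref{T:2} (convergence of the barycentric iterates $S_k$ in $X$) and the arithmetic conclusion produced by the classical SLLN (convergence of the scalar averages of $f(a_i)$); the Ces\`aro estimate above, a direct consequence of geodesic convexity of $f$ along the averaging step, is precisely the bridge. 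A minor technical point is the integrability needed to invoke the scalar SLLN: by splitting $f=f^+-f^-$, noting that a convex lsc real-valued function is bounded below on a bounded geodesically convex set, and reducing to the vacuous case $\mathbb{E}f=+\infty$, one checks $\mathbb{E}|f(a_0)|<\infty$ in the nontrivial case.
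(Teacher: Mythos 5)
Your proposal is correct and follows essentially the same route as the paper's own proof: iterate $S_{k+1}=S_k\#_{1/(k+1)}a_k$ via Remark~\ref{R:2}, apply Theorem~\ref{T:2} to get $S_k\to\mathbb{E}\mu$, prove the Ces\`aro-type bound $f(S_k)\leq\frac{1}{k}\sum_{i=0}^{k-1}f(a_i)$ by induction from convexity (the paper phrases this as $f(S_k)\le Z_k$ for the recursively defined scalar averages $Z_k$), and conclude by lower semi-continuity. The only cosmetic difference is that you invoke Kolmogorov's classical strong law for the scalar averages where the paper cites Theorem~\ref{T:2} applied on $\mathbb{R}$, and your added integrability discussion (boundedness below of $f$ on $G$, reduction to $\mathbb{E}f<+\infty$) is a careful touch the paper leaves implicit.
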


\begin{proof}
Choose a sequence of independent, identically distributed random variables $Y_k$ with values in $G$,
and with distribution $\mu$.
Let $S_k \in G$ be defined as in Remark~\ref{R:2}, i.e.,
$S_1:=Y_1$ and $S_{k+1}:=S_k\#_{\frac{1}{k+1}}Y_{k+1}$.
Similarly, let $Z_k \in \mathbb{R}$ be defined as $Z_1:=f(Y_1)$ and $Z_{k+1}:=Z_k\#_{\frac{1}{k+1}}f(Y_{k+1})$.
We can explicitly write as
\[ Z_{k+1}=Z_k\#_{\frac{1}{k+1}}f(Y_{k+1})=\frac{k}{k+1}Z_k+\frac{1}{k+1}f(Y_{k+1}). \]
By Theorem~\ref{T:2}, we have $S_k\to\mathbb{E}\mu$ and $Z_k\to\mathbb{E}(f_* \mu)=\mathbb{E}f$,
where $f_* \mu$ denoted the push-forward of $\mu$.
We proceed by induction showing
\begin{equation}\label{eq:jensen.1}
f(S_k)\leq Z_k.
\end{equation}
For $k=1$, this clearly holds.
For general $k \ge 1$, we have by induction
\begin{align*}
f(S_{k+1}) &=f(S_k\#_{\frac{1}{k+1}}Y_{k+1})\\
&\leq \frac{k}{k+1}f(S_k)+\frac{1}{k+1}f(Y_{k+1})\\
&\leq \frac{k}{k+1}Z_k+\frac{1}{k+1}f(Y_{k+1})=Z_{k+1}
\end{align*}
showing \eqref{eq:jensen.1}.
Hence, by the lower semi-continuity of $f$, we obtain
\[ f(\mathbb{E}\mu)\leq \liminf_{k\to\infty}f(S_k)\leq \liminf_{k\to\infty}Z_k=\mathbb{E}f \]
and complete the proof.
\end{proof}

\section*{Acknowledgment}
The authors would like to thank the anonymous referee for his valuable comments, in particular improving the discussion in section 6.

The second author would like to thank Prof.~John Holbrook for raising his attention to the approximation problem
of the barycenter treated in Remark~\ref{R:2} on the sphere.
The second author had doubts in the convergence of such approximation scheme in the positive curvature case,
but then he learned about the favorable outcomes of Prof.~Holbrook's numerical experiments on the sphere
in a private communication with him, which initiated the further study of the problem.

\end{document}